\newtheorem{theorem}{Theorem}
\newtheorem{corollary}{Corollary}
\newtheorem{remark}{Remark}
\newcommand{\newpart}{\color{black}}
\begin{document}

\title{An Interior-Point-Inspired algorithm for Linear Programs arising in Discrete Optimal Transport}
\author{Filippo Zanetti\footnote{School of Mathematics, University of Edinburgh, Edinburgh, UK. \href{mailto:f.zanetti@sms.ed.ac.uk}{f.zanetti@sms.ed.ac.uk}} \and Jacek Gondzio\footnote{School of Mathematics, University of Edinburgh, Edinburgh, UK. \href{mailto:j.gondzio@ed.ac.uk}{j.gondzio@ed.ac.uk}}}
\date{}
\maketitle

\begin{abstract}
Discrete Optimal Transport problems give rise to very large linear
programs (LP) with a particular structure of the constraint matrix.
In this paper we present  a hybrid algorithm that mixes an
interior point method (IPM) and column generation,
specialized for the LP originating from the Kantorovich Optimal
Transport problem.
Knowing that optimal solutions of such problems display a high
degree of sparsity, we propose a column-generation-like technique
to force all intermediate iterates to be as sparse as possible.
The algorithm is implemented nearly matrix-free. 
Indeed, most of the computations avoid forming the huge matrices
involved and solve the Newton system using only a much smaller
Schur complement of the normal equations.
We prove theoretical results about the sparsity pattern of the optimal
solution, exploiting the graph structure of the underlying problem.
We use these results to mix iterative and direct linear solvers
efficiently, in a way that avoids producing preconditioners
or factorizations with excessive fill-in and at the same time
guaranteeing a low number of conjugate gradient iterations.
We compare the proposed method with two state-of-the-art solvers and show that
it can compete with the best network optimization tools in terms
of computational time and memory usage. We perform experiments with problems
reaching more than four billion variables and demonstrate the robustness
of the proposed method.
\end{abstract}

\noindent\textbf{Keywords}: Optimal Transport, Interior Point Method, Column Generation, Sparse Approximation, Chordal Graphs.

\section{Introduction}
Interior point methods (IPMs) \cite{wright} are among the most efficient optimization solvers for linear programs and are often able to outperform their competitors when it comes to very large problems. They rely on the solution of a linear system to compute the Newton direction that allows to find the next approximation and often employ iterative Krylov methods with a proper preconditioner (e.g. \cite{berga_inexact,iter:BCO-COAP,precipm,iter:GOSV,iter:OS-pcg1,iter:VOC}). They are well suited for the solution of problems with some underlying structure that allows for a simplified formulation and efficient preconditioning (e.g.\ \cite{castro,castro2,mfcs,xray}); they have been used in combination with column-generation approaches (see e.g. \cite{columngeneration}) to solve problems with many more variables than constraints; they can be formulated in a matrix-free way for solving very large problems (e.g. \cite{mfcs,mfipm}) and they have been used to solve a variety of sparse approximation problems (e.g. \cite{DecDisGonPouVio:sparse}).

Linear programs arising from discrete Optimal Transport (OT) possess many of the attractive characteristics (see e.g. \cite{compopttrans}): they can have an extremely large number of variables, but relatively few constraints and as a consequence of this, their optimal solution is extremely sparse; they have a very particular constraint matrix with a Kronecker structure which leads to a simplified formulation of the normal equations; they need to be dealt with in a matrix-free way, to avoid forming the huge, very sparse and highly structured constraint matrix.

Linear programs with many more variables than constraints are well suited to be solved using a column generation approach (see e.g. \cite{colgen,colgen3,colgen2}). In this paper, we propose a hybrid method for general purpose discrete optimal transport problems: {\newpart the proposed algorithm can be interpreted both as a very aggressive column generation approach (with just one Newton step applied to each restricted master problem) and as a very relaxed IPM (which works only with a subset of variables and ignores the other ones).} The method is highly specialized and exploits the structure of the underlying problem and the known properties of the optimal solution to efficiently mix iterative and direct solvers for the Newton linear system. It is implemented without ever forming the constraint matrix, but only accessing it via matrix-vector products, exploiting its Kronecker structure; the only matrix that is formed is the much smaller Schur complement of the normal equations. The method uses a specialized sparse linear algebra in order to tackle problems of very large dimension. 

In particular, the normal equations within the IPM are further reduced to the Schur complement and then solved either with the Conjugate Gradient (CG) method \cite{cg} or with a general sparsity-exploiting Cholesky factorization \cite{matrixcomp}. This is possible because the fill-in of the Cholesky factor of the Schur complement gets smaller and smaller when the method approaches optimality; this is rigorously proven using the graph interpretation of the OT 
problem and confirmed by extensive computational evidence. Moreover, the proposed theoretical result allows to characterize some non-trivial chordal sparsity patterns in a new way.

Many methods have been designed for solving OT problems, see e.g.\ \cite{AHA,sinkhorn,facca_benzi,shortlist,ling_okada,AHA2,shielding} and the comprehensive summary \cite{dotmark}. A similar idea to the one proposed here, where a sparse version of IPM is used to solve optimal transport problems, was recently proposed in \cite{ot_ipm}, for a particular subset of OT problems, {\newpart and in \cite{natale_todeschi} an IPM was applied to solve optimal transport problems involving finite volumes discretization.}

The strengths of the algorithm proposed in this paper, compared to the previously mentioned approaches, are: 
\begin{itemize}
\item Very general formulation, able to deal with many types of problems; indeed, the proposed method is tested on a large and varied collection of problems.
\item Adaptability to multiple cost functions, while other methods are often specialized only to one particular metric.
\item A highly specialized strategy to solve linear systems, which allows for lower and scalable requirements, both for time and memory.
\end{itemize}

{\newpart 
Among other features the proposed method heavily exploits the network structure of the constraint matrix in the problem. Several interior point algorithms have been developed for network optimization problems: in \cite{castro_lemon}, the authors derive a specialized IPM for the minimum cost flow problem on bipartite networks; the structure of the constraint matrix and normal equations is very similar to the one presented here. In \cite{resende_veiga}, another specialized IPM method is applied to the same type of networks. In \cite{frangioni_gentile}, preconditioners for IPMs applied to minimum cost flow problems are discussed. In the current paper however, the IPM is mixed with a column generation technique and the linear algebra is specialized even further, in order to exploit the extreme sparsity of the solution of the optimal transport problem.

}

The proposed method is compared with the IBM ILOG Cplex network simplex solver \cite{cplex,bertsekas,orlin}, which is a highly optimized commercial software that has been shown to be very fast and reliable when dealing with OT problems, {\newpart and with the LEMON (Library for Efficient Modelling and Optimization in Networks) network simplex solver \cite{lemon_paper}.} The computational experiments are performed on the DOTmark collection of images \cite{dotmark}, considering cost functions given by the $1-$norm, $2-$norm and $\infty-$norm; they show that the proposed method {\newpart  outperforms the Cplex solver and matches the performance of LEMON for many of the problems considered, while requiring less memory than both of them.} We performed tests with extremely large problems, with up to $4.3$ billion variables and show that the proposed method is scalable both in terms of computational time and memory requirements.

The rest of the paper is organized as follows: in Section~\ref{section_ot} we introduce the discrete optimal transport problem formulation; in Section~\ref{section_ipm} we present the hybrid interior point-column generation method in all its key features; in Section~\ref{section_ne} we analyze the structure of the normal equations and introduce the mixed iterative-direct approach for their solution; in Section~\ref{section_results} we present the test problems and show the computational results.

\subsection{Notation}
We denote by $\text{vec}(\cdot)$ the operator that takes a $k_1\times k_2$ matrix and reshapes it column-wise into a $k_1k_2$ vector. We use the notation $\lceil x\rceil$ for the ceiling function, i.e.\ the smallest integer larger than or equal to $x$. We denote by $I_k$ and $\mathbf e_k$ respectively the identity matrix and the vector of all ones of size $k$. We denote by $\mathbb R^k_+$ the set of $k-$vectors with nonnegative components.


\section{From optimal transport to optimization}
\label{section_ot}
Below we recall the Kantorovich formulation \cite{kantorovich} of the discrete Optimal Transport problem: given a starting vector $\mathbf a\in\mathbb R_+^m$ and a final vector $\mathbf b\in\mathbb R_+^n$, such that $\sum\mathbf a_j=\sum\mathbf b_j$, find a coupling matrix $\mathcal P$ inside the set
\begin{equation}\label{feasible_region}
U(\mathbf a,\mathbf b)=\Big\{\mathcal P\in\mathbb R_+^{m\times n},\ \mathcal P\mathbf e_n=\mathbf a,\ \mathcal P^T\mathbf e_m=\mathbf b\Big\}
\end{equation}
that is optimal with respect to a certain cost matrix $\mathcal C\in\mathbb R_+^{m\times n}$; i.e.\ find the solution of the following optimization problem
\begin{equation}
\label{OT_problem}
\min_{\mathcal P\in U(\mathbf a,\mathbf b)} \sum_{i,j}\mathcal C_{ij}\mathcal P_{ij}.
\end{equation}

We can interpret this OT problem as minimizing the cost of moving some mass in the configuration $\mathbf a$ into the configuration $\mathbf b$: $\mathcal C_{ij}$ gives the cost of moving a unit of mass from $\mathbf a_i$ to $\mathbf b_j$ and the optimal solution $\hat{\mathcal P}_{ij}$ tell us how much we should move from $\mathbf a_i$ to $\mathbf b_j$. The constraints given by the set $U(\mathbf a,\mathbf b)$ impose three conditions: we only move positive quantities of mass; we ensure that from each bin $i$ of configuration $\mathbf a$, we move out exactly a quantity $\mathbf a_i$ overall; we ensure that for each bin $j$ of configuration $\mathbf b$, we move in exactly a quantity $\mathbf b_j$ overall. 

In practice, the "mass" of $\mathbf a$ and $\mathbf b$ could be anything, from a probability distribution to actual physical quantities that need to be moved. If the cost matrix $\mathcal C$ is chosen appropriately (see e.g. \cite[Proposition 2.2]{compopttrans}), then the optimal solution of \eqref{OT_problem} defines a distance between $\mathbf a$ and $\mathbf b$, called the {\it q-Wasserstein} distance:
\begin{equation}
\label{wasserstein}
W_q(\mathbf a,\mathbf b)=\big(\sum_{i,j}\mathcal C_{ij}\hat{\mathcal P}_{ij}\big)^{1/q}.
\end{equation}

\subsection{Kantorovich linear program}
We can rewrite the optimization problem \eqref{OT_problem} as a standard Linear Program (LP):
\begin{align}
\min_{\mathbf p\in\mathbb R^{mn}} \quad&\mathbf c^T\mathbf p\notag\\
\textup{s.t.}\quad & \begin{bmatrix}\mathbf e_n^T\otimes I_m\\I_n\otimes \mathbf e_m^T\end{bmatrix}\mathbf p=\begin{bmatrix}\mathbf a\\\mathbf b\end{bmatrix}=\colon\mathbf f\label{kantorovichlp}\\
& \mathbf p\ge0,\notag
\end{align}
where $\otimes$ denotes the Kronecker product and $\mathbf c\in\mathbb R^{mn}$ and $\mathbf p\in\mathbb R^{mn}$ are the vectorized versions of $\mathcal C$ and $\mathcal P$ respectively, $\mathbf c=\text{vec}(\mathcal C)$ and $\mathbf p=\text{vec}(\mathcal P)$.

The matrix of constraints in \eqref{kantorovichlp} has the following structure
\begin{equation}
\label{Astructure}
A=\begin{bmatrix}A_1\\A_2\end{bmatrix}=\begin{bmatrix}\mathbf e_n^T\otimes I_m\\I_n\otimes \mathbf e_m^T\end{bmatrix},
\end{equation}
where $A_1$ is an operator that computes the sum of the entries of the rows of $\mathcal P$, while $A_2$ computes the sum of the entries of the columns. Notice that matrix $A$ is rank deficient by $1$. These operators can be applied in a matrix-free way. Recall the following properties of the Kronecker product \cite{horn_johnson}:
\[(Q\otimes R)^T=Q^T\otimes R^T,\]
\[(Q\otimes R)\mathbf x=\text{vec}(RXQ^T),\]
where $\text{vec}(X)=\mathbf x$.
Then, we can apply matrices $A_1$, $A_2$, $A_1^T$ and $A_2^T$ to a vector as follows:
\[A_1\mathbf x=(\mathbf e_n^T\otimes I_m)\mathbf x=\text{vec}(I_mX\mathbf e_n)=X\mathbf e_n,\]
\[A_2\mathbf x=(I_n\otimes \mathbf e_m^T)\mathbf x=\text{vec}(\mathbf e_m^TXI_n)=X^T\mathbf e_m,\]
\[A_1^T\mathbf u=(\mathbf e_n\otimes I_m)\mathbf u=\text{vec}(I_m\mathbf u\mathbf e_n^T)=\text{vec}(\mathbf u\mathbf e_n^T),\]
\[A_2^T\mathbf w=(I_n\otimes \mathbf e_m)\mathbf w=\text{vec}(\mathbf e_m\mathbf w^TI_n)=\text{vec}(\mathbf e_m\mathbf w^T),\]
where $\mathbf x\in\mathbb R^{mn}$, $X\in\mathbb R^{m\times n}$, $\mathbf u\in\mathbb R^m$, $\mathbf w\in\mathbb R^n$. Notice that the matrices involved $X$, $\mathbf u\mathbf e_n^T$ and $\mathbf e_m\mathbf w^T$ are all of dimension $m\times n$, and thus much smaller than matrix $A$.

\subsection{Graph formulation}
\label{graph_formulation}
We can reformulate the optimal transport problem as a minimum cost flow problem over a bipartite graph: we have $m$ source nodes, each one with an output $\mathbf a_i$, connected to $n$ sink nodes, each requiring an input $\mathbf b_j$. The incidence matrix of such a graph is very similar to the constraint matrix $A$:
\begin{equation}
\label{incidencematrix}
\begin{bmatrix}I_n\otimes \mathbf e_m^T\\-\mathbf e_n^T\otimes I_m\end{bmatrix}.
\end{equation}
A known result of the optimal solution of an optimal transport problem is that the bipartite graph corresponding to {\newpart  any vertex of the feasible region \eqref{feasible_region}}, where the edges corresponding to zero flows have been removed, is acyclic (see e.g. \cite[Chapter 9]{network_flows} or \cite{algorithm_network}). In particular, this implies that {\newpart  there exists an optimal solution $\hat{\mathcal P}$ with at most $m+n-1$ nonzero entries.}

Figure~\ref{ot_intro} shows a small example of discrete optimal transport and the corresponding bipartite graph formulation: the problem is to move the red configuration (on the left) onto the blue configuration (on the top), where the cost is given by the physical distance between the bins of the histogram.

\begin{figure}[h]
\caption{A small example of discrete optimal transport and the corresponding bipartite graph formulation; here the intensity of the color is proportional to the quantity of mass to be moved.}
\label{ot_intro}
\centering
\includegraphics[width=.6\textwidth]{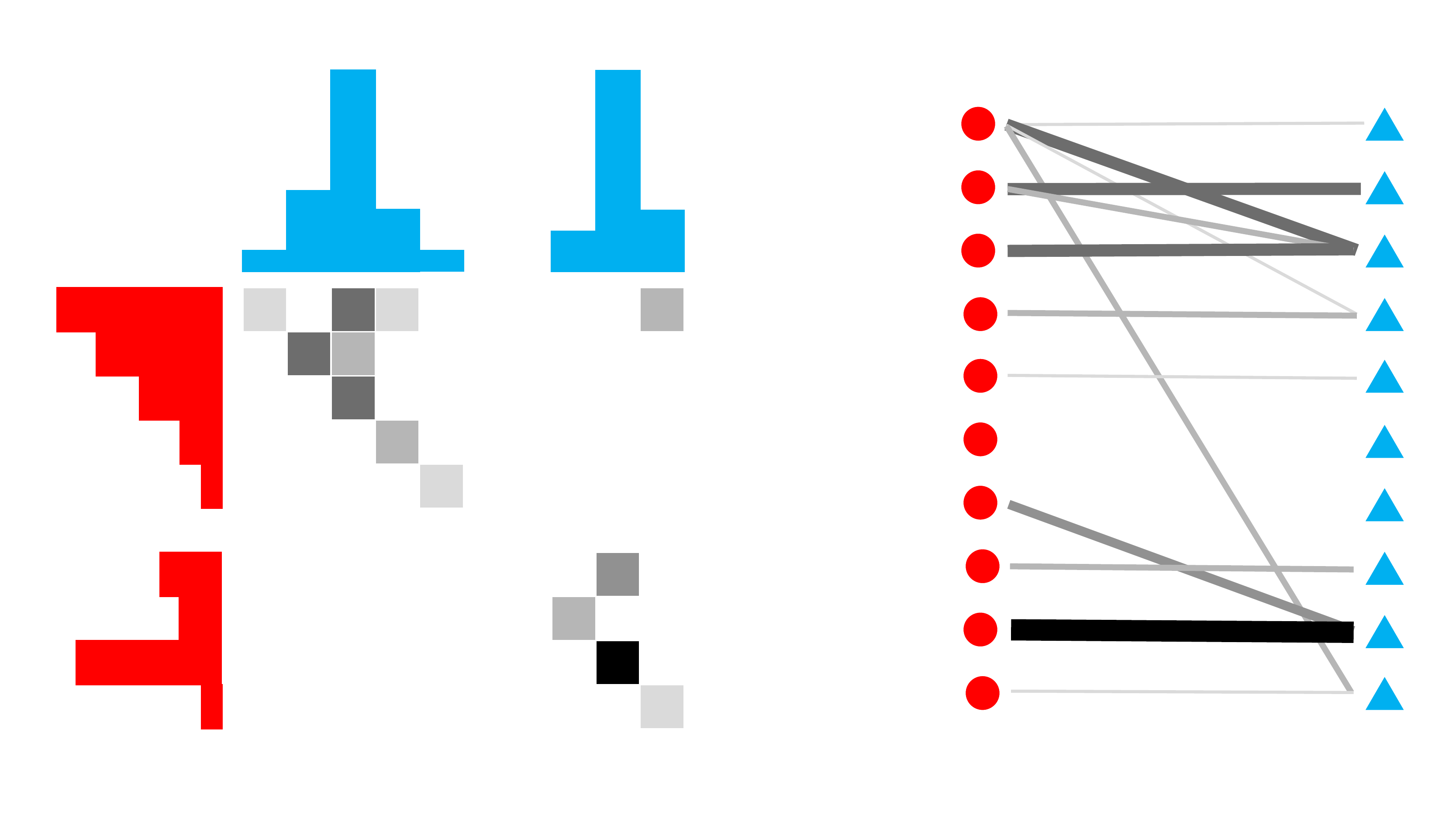}
\end{figure}

\section{{\newpart Interior-point-inspired algorithm for optimal transport}}
\label{section_ipm}
Problem \eqref{kantorovichlp} is a standard linear program, that can be solved using an interior point method \cite{wright}. However, the constraint matrix can be extremely large for large values of $m$ and $n$: indeed, the number of variables is $m\cdot n$ and the number of constraint is $m+n$. A linear program with such a structure of the constraint matrix is well suited to be solved by a column generation approach (see e.g.\ \cite{columngeneration,colgen,colgen3,colgen2}). We propose a method that mixes these two techniques: an interior-point-inspired method that keeps the iterates sparse at each iteration and updates the list of variables that are allowed to be non-zero in a similar way to the column generation method. We use the term \emph{support} for this list of presumed "basic" variables, to avoid confusion with the notion of \emph{basis} in the simplex and IPM communities.

{\newpart 
In the next sections, we first introduce the standard IPM formulation and then the sparsified version, highlighting the relation with  standard IPM and column generation algorithms, and the pros and cons of using such an approach.
}

\subsection{Interior point method}
In an oversimplified interpretation (see \cite{wright} for more details), an interior point method solves problem \eqref{kantorovichlp} by adding a logarithmic barrier to enforce the non-negativity constraint, to obtain the Lagrangian:
\[\mathcal L(\mathbf p,\mathbf y,\mu)=\mathbf c^T\mathbf p-\mathbf y^T(A\mathbf p-\mathbf f)-\mu\sum_{i=1}^{mn}\log\mathbf p_i,\]
where {\newpart $\mathbf y\in\mathbb R^{m+n}$ is the Lagrange multiplier associated with the constraint $A\mathbf p=\mathbf f$} and the parameter $\mu$ is slowly driven to zero during the IPM iterations.

At each IPM iteration, one step of the Newton method is performed; the linear system which needs to be solved is
\begin{equation}
\label{fullsystem}
\begin{bmatrix} A\ & 0\ & 0\\0\ & A^T\ & I_{mn}\\ S\ & 0\ & P\end{bmatrix}\begin{bmatrix} \mathbf{\Delta p}\\\mathbf{\Delta y}\\\mathbf{\Delta s}\end{bmatrix}=\begin{bmatrix}\mathbf{r_1}\\\mathbf{r_2}\\\mathbf{r_3}\end{bmatrix}=\begin{bmatrix}\mathbf f-A\mathbf p\\\mathbf c-A^T\mathbf y-\mathbf s\\\sigma\mu \mathbf e_{mn}-PS\mathbf e_{mn}\end{bmatrix}.
\end{equation}
When reduced to the normal equations approach, it takes the form
\begin{equation}
\label{normalequationsipm}
A\Theta A^T\mathbf{\Delta y}=\mathbf{r_1}+A\Theta(\mathbf{r_2}-P^{-1}\mathbf{r_3})
\end{equation}
where $\Theta=\text{diag}(\theta)$ is a diagonal matrix; its entries are $\theta_j=\mathbf p_j/\mathbf s_j$, $\mathbf s$ being the dual slack variable associated with the LP \eqref{kantorovichlp}, $P=\text{diag}(\mathbf p)$ and $S=\text{diag}(\mathbf s)$, $\sigma$ is a parameter responsible for the reduction of $\mu$.

Notice that the linear system \eqref{normalequationsipm} is of dimension $(m+n)$ and thus much smaller than \eqref{fullsystem}, that is of dimension $(m+n+2mn)$; however, \eqref{normalequationsipm} contains blocks that are fully dense. Moreover, the IPM needs to work with many vectors of size $m\cdot n$, which requires excessive storage for large values of $m$ and $n$.

\subsection{Sparse approach}
It has been observed (see e.g. \cite{mfipm}) that the entries of matrix $\Theta$ can be divided in two groups: for "basic" indices $j\in\mathcal B$, for which $\mathbf p_j\to\mathbf{\hat p}_j>0$ and $\mathbf s_j\to\mathbf{\hat s}_j=0$, $\theta_j$ becomes very large as the IPM approaches convergence; for the "nonbasic" indices $j\in\mathcal N$, such that $\mathbf p_j\to\mathbf{\hat p}_j=0$ and $\mathbf s_j\to\mathbf{\hat s}_j>0$, $\theta_j$ becomes very small. {\newpart  Concerning the structure of the normal equations matrix \eqref{normalequationsipm}, this separation into "basic" and "nonbasic" indices implies that the following holds close to optimality
\begin{equation}\label{normalequationsapproximation}
A\Theta A^T = \sum_{j=1}^{mn}\theta_jA_jA_j^T \approx \sum_{j\in\mathcal B}\theta_jA_jA_j^T 
\end{equation}
where $A_j$ represents the $j-$th column of $A$.
}

In the non-degenerate case, the optimal solution $\hat{\mathbf p}$ is expected to have only $m+n-1$ non-zeros, out of $m\cdot n$ entries: in the common scenario where $m=n$, this means that the density of the optimal solution decreases as $1/m$ and that at optimality most entries of $\theta$ are close to zero, and only a small fraction of them is large. 
We can exploit these properties to derive a sparsified method, where each iteration is comprised by two phases: an interior point phase on the reduced problem, and a column-generation-style update of the support.
{\newpart
Notice that, while the primal variable is expected to be sparse at optimality, the dual slack $\mathbf s$ instead is expected to be dense, due to strict complementarity; in the proposed approach, only the sparsity pattern of the primal variable $\mathbf p$ is considered. Indeed, the entries for which $\mathbf s_j\to\mathbf{\hat s}_j>0$ generate corresponding components $\theta_j = \mathbf p_j/\mathbf s_j$ that converge to zero and are therefore ignored, in order to exploit \eqref{normalequationsapproximation}.
}

In the {\it IPM phase}, we apply a standard IPM to a reduced form of the problem. {\newpart  We start from considering the subset of $\{1,2,\dots,mn\}$ that corresponds to the indices of the primal variables that are allowed to attain nonzero values; this subset, that we call \texttt{index}, contains the indices of the variables belonging to the current support. We would like to select the support such that if $j\in\texttt{index}$, then $\mathbf{\hat p}_j>0$, $\mathbf{\hat s}_j=0$, and if $j\notin\texttt{index}$, then $\mathbf{\hat p}_j=0$, $\mathbf{\hat s}_j>0$.} 

Entries $\mathbf p_j$ and $\mathbf s_j$ for $j\notin\texttt{index}$ are forced to be zero and the same applies to the components of the Newton direction, since we are not updating them; {\newpart we expect $\mathbf s_j>0$  for all $j\notin\texttt{index}$, but we set them to zero because they are ignored by the IPM phase.} We define as $\mathbf p_\text{red}$ and $\mathbf s_\text{red}$ the reduced $\mathbf p$ and $\mathbf s$ vectors that contain only the components {\newpart  corresponding to the \texttt{index} subset}. The logarithmic barrier is applied only to the set of variables in the support and the complementarity measure $\mu$ is thus computed as $\mu=(\mathbf p_\text{red}^T\mathbf s_\text{red})/\psi$, where $\psi=|\texttt{index}|$. If we call $A_\text{red}$ the submatrix of $A$ obtained considering only the columns in {\tt index}, then the linear system to solve becomes 
\begin{equation}
\label{redsystem}
\begin{bmatrix} A_\text{red}\ & 0\ & 0\\0\ & A_\text{red}^T\ & I_\psi\\ S_\text{red}\ & 0\ & P_\text{red}\end{bmatrix}\begin{bmatrix} \mathbf{\Delta p}_\text{red}\\\mathbf{\Delta y}\\\mathbf{\Delta s}_\text{red}\end{bmatrix}=\begin{bmatrix}\mathbf{r_1}\\\mathbf{r_2}_\text{red}\\\mathbf{r_3}_\text{red}\end{bmatrix}=\begin{bmatrix}\mathbf f-A_\text{red}\mathbf p_\text{red}\\\mathbf c_\text{red}-A_\text{red}^T\mathbf y-\mathbf s_\text{red}\\\sigma\mu \mathbf e_\psi-P_\text{red}S_\text{red}\mathbf e_\psi\end{bmatrix}.
\end{equation}
{\newpart 
With this approach, the diagonal of $\Theta$ is sparsified and thus the normal equations matrix is much more sparse than $A\tilde\Theta A^T$ for any possible diagonal $\tilde\Theta$ with strictly positive diagonal entries (as it would be for a standard IPM).}

In the {\it update phase}, we update the subset {\tt index} and expand or reduce the support; this can happen in two ways. The support is enlarged according to the reduced cost $\mathbf c-A^T\mathbf y$; if any component is negative, the indices corresponding to the most negative reduced costs are added to \texttt{index}. {\newpart We expect these newly added variables to satisfy $\mathbf p_j\to\mathbf{\hat p}_j>0$, $\mathbf s_j\to\mathbf{\hat s}_j=0$ and thus $\theta_j\gg1$.} The new entries are initialized with $(\mathbf p_\text{red})_j=(\mathbf s_\text{red})_j=\sqrt\mu$, in order to maintain the overall complementarity measure unaltered, {\newpart at the expense of adding a small infeasibility. This approach has been used in warm starting IPMs in \cite{warmstart} and since then has been used with success in numerous applications of IPMs which requires warm starting with newly introduced variables.}

Variables are removed from the support based on the value of $\mathbf p_\text{red}$; if $(\mathbf p_\text{red})_j$ is smaller than a certain threshold when the IPM is near convergence, we assume that {\newpart this component should satisfy
$\mathbf p_j\to\mathbf{\hat p}_j=0$, $\mathbf s_j\to\mathbf{\hat s}_j>0$ and thus $\theta_j\ll1$. Other techniques could be used to detect which variables should be removed, for example if $\theta_j$ is very small, if $\mathbf s_j$ is very large, or using some specifically developed indicators such as in \cite{IPM_indicators}.
}

In order to not perturb too much the IPM algorithm, only $m$ variables are allowed to enter or leave the support at every iteration. In practice, in the first iterations of IPM many variables enter the support, while no entry of $\mathbf p$ is small enough to leave it; in the late phase of IPM instead many variables leave the support, while almost no index is added to it. 

{\newpart 
The stopping criterion involves primal infeasibility, dual infeasibility of the current restricted problem and the complementarity measure; the method is stopped when
\[\max\Big(\frac{\|\mathbf{r_1}\|}{1+\|\mathbf f\|},\frac{\|\mathbf{r_2}_\text{red}\|}{1+\|\mathbf c_\text{red}\|},\mu\Big)<\texttt{tol},\] 
where \texttt{tol} is a predetermined tolerance. The method could also check if there are variables with negative reduced cost that should still be added to the support; however, in practice this was never the case, since the support would stabilize (and actually start to drop variables) before the IPM indicators would reach convergence.
}

The initial {\tt index} subset can be chosen in many ways according to heuristics that try to capture the sparsity pattern of the optimal solution. A simple approach is to include in the initial support the variables that correspond to a small cost $\mathbf c_j$, according to a predetermined threshold, since the optimal solution will try to allocate as much mass as possible in these low-cost variables. We know that the optimal support should include only $n+m-1$ entries, but at the beginning we choose the subset {\tt index} with a larger number of entries, usually between three and ten times more than the expected number of indices in the \texttt{index} subset corresponding to the optimal solution.

{\newpart 
\subsection{Comments on the method}
In a column generation method, it is possible not to solve the restricted master problems to optimality and still converge to a solution of the master problem (see e.g.\ \cite{columngeneration}); the approach presented here can be seen as an extreme situation, where only one IPM iteration is applied to each restricted master problem before updating the support. In this way, the method does not "waste" too much time optimizing the early restricted master problems, which have a very inexact support, while it spends most of the time looking for the correct support and optimizing to full accuracy the late restricted master problems (when the support has stabilized).

The method is clearly not an interior point method, since the iterates are sparsified and do not belong to the interior of the feasible region. Such a method benefits from the sparsity of the vectors and matrices involved, at the expense of losing some dual information. Indeed, while vector $\mathbf p$ and $\theta$ are expected to be extremely sparse close to optimality, the dual variable $\mathbf s$ is not, due to the strict complementarity of linear programs. The stopping criterion indeed relies only on the dual infeasibility of the restricted problem. However, the use of a primal-dual method allows to obtain fast convergence once the support of the optimal solution has been established. An interior point method that accurately captures all the dual information would inevitably need to use fully dense vectors and normal equations matrices, which reduces substantially its applicability to large problems. The method presented here instead can be applied to huge problems, but does not reconstruct accurately the dual information during the iterations; however, the optimal dual variable $\hat{\mathbf s}$ can be computed after the algorithm terminates with one extra reduced costs computation $\hat{\mathbf s}=\mathbf c-A^T\hat{\mathbf y}$, since the Lagrange multiplier $\hat{\mathbf y}$ is reconstructed accurately.

We highlight that the IPM phase uses multiple centrality correctors \cite{colombo_gondzio} to improve the performance; a standard symmetric neighbourhood of the central path (see e.g. \cite{wright}) is used to find the correctors. Each iteration of the method however relies on a neighbourhood built with a different support. The stepsize of the method is found simply by computing the step to the boundary and scaling it down by a constant close to $1$ (e.g.\ $0.995$). 

Problems for which the support changes heavily between subsequent iterations (for example because the initial and final configuration $\mathbf a$ and $\mathbf b$ have mass concentrated on a narrow region and their nonzero patterns have small or no intersection) can create difficulties for the method, because the next IPM iterations would solve a restricted problem with potentially a very different support than the current one. These difficulties will appear for some specific problems in the numerical results section, where a large number of iterations is required before the support stabilizes.
}

\subsection{Pricing method.}
\label{pricing_section}
In order to update the support, we need to compute the full reduced cost vector $\mathbf c-A^T\mathbf y$; if $m$ and $n$ are large, this can be very expensive. To reduce the cost of this operation, we propose a heuristic pricing approach. Notice the following
\[(\mathbf c-A^T\mathbf y)_j=\mathbf c_j-\mathbf y^T \mathbf a_j=\mathbf c_j-\mathbf y_{k_{1j}}-\mathbf y_{k_{2j}},\]
where $\mathbf a_j$ is the $j-$th column of $A$ and $k_{1j}$ and $k_{2j}$ {\newpart  are the source and destination nodes of edge $j$,} defined as
\[k_{1j}=\begin{cases}
(j\mod m) & \text{if }\,\,(j\mod m)\ne0\notag\\
m & \text{if }\,\,(j\mod k)=0\notag
\end{cases},\qquad k_{2j}=\Big\lceil\frac{j}{m}\Big\rceil.\]
We make the assumption that a large portion of the smallest (most negative) reduced costs correspond to small values of $\mathbf c_j$; the accuracy of this statement depends on the values attained by the Lagrange multiplier $\mathbf y$, but we observed in practice that it is true in most of the iterations. Therefore, before starting the algorithm, we can compute a subset of indices $J$, such that if $j\in J$, $\mathbf c_j<C_\text{max}$, for some fixed value $C_\text{max}$, and the corresponding indices $k_{1j}$ and $k_{2j}$. Then, during the update phase, we can quickly compute the subset of the entries of the reduced cost vector corresponding to the indices in $J$ and use only these to update the support. Of course, this method misses some of the variables to be added; thus, after a certain number of IPM iterations where we use this method, we should compute the full vector of reduced costs. In this way, we keep the low cost of a single iteration, but we likely increase the number of iterations required. The number of consecutive iterations in which we use the heuristic needs to be chosen carefully: we would like it to be large, to reduce the computational cost as much as possible, but if it is too large we risk performing useless iterations, since the next update with the full reduced costs might change drastically the support. Numerical evidence suggests that the reduced costs should be refreshed every 3 or 4 iterations.


%

\subsection{Structure of the normal equations matrix}
\label{NormEqProp}
The normal equations matrix in \eqref{normalequationsipm} takes the form
\begin{equation}\label{normaleq}
A\Theta A^T=\begin{bmatrix}M & V\\V^T & N\end{bmatrix}\end{equation}
where $V\in\mathbb R^{m\times n}$, $\text{vec}(V)=\theta$, $M\in\mathbb R^{m\times m}$ and $N\in\mathbb R^{n\times n}$ are diagonal and
\[M_{ii}=\sum_{t=0}^{n-1} \theta_{i+tm},\,\, i=1,\dots,m,\qquad N_{jj}=\sum_{t=1}^{m} \theta_{(j-1)m+t},\,\, j=1,\dots,n.\]
{\newpart The reader can verify this claim by direct computation using matrix \eqref{Astructure}. This structure is the same as found in \cite{castro_lemon}. }

Notice the meaning of matrices $M$ and $N$: 
\begin{equation}\label{matrix_relations}
V\mathbf e_n=M\mathbf e_m,\qquad V^T\mathbf e_m=N\mathbf e_n.\end{equation}
i.e.\ $M_{kk}$ is the sum of the entries of row $k$ of $V$, while $N_{kk}$ is the sum of the entries of column $k$ of $V$. 

In a standard IPM, matrix $V$ would be completely dense, since the entries $\theta_j$ are all strictly positive. However, in the proposed hybrid method most entries of $\theta$ are exactly zero, making matrix $V$ extremely sparse.

\section{Solution of the normal equations}
\label{section_ne}
To solve a linear system involving matrix \eqref{normaleq}, we can use the Schur complement approach: the solution of
\[\begin{bmatrix}M & V\\V^T & N\end{bmatrix}\begin{bmatrix}\alpha_1\\\alpha_2\end{bmatrix}=\begin{bmatrix}\beta_1\\\beta_2\end{bmatrix}\]
is given by either
\[\alpha_1=S_N^{-1}(\beta_1-VM^T\beta_2),\quad\alpha_2=N^{-1}(\beta_2-V^T\alpha_1),\]
or
\[\alpha_2=S_M^{-1}(\beta_2-V^TM^{-1}\beta_1),\quad\alpha_1=M^{-1}(\beta_1-V\alpha_2),\]
where $S_N=M-VN^{-1}V^T$ and $S_M=N-V^TM^{-1}V$ are the two Schur complements. Such an approach is convenient in this case since matrices $M$ and $N$ are diagonal and thus very easy to invert. Notice that both Schur complements are rank deficient by 1, but this is not a problem in practice: both iterative methods and direct solvers can deal with singular matrices by solving the corresponding least squares problem.

Let us analyze $S_M$: if we sum the rows of matrix $V^TM^{-1}V$ we obtain, exploiting \eqref{matrix_relations}:
\[V^TM^{-1}V\mathbf e_n=V^TM^{-1}M\mathbf e_m=V^T\mathbf e_m=N\mathbf e_n.\]

The Schur complement $S_M$ is equal to matrix $N$ minus the previous matrix. Therefore
\begin{align}
|(S_M)_{kk}|-\sum_{i\ne k}|(S_M)_{ki}|&=|N_{kk}-(V^TM^{-1}V)_{kk}|-\sum_{i\ne k}|(V^TM^{-1}V)_{ki}|\notag\\
&=N_{kk}-(V^TM^{-1}V)_{kk}-\sum_{i\ne k}(V^TM^{-1}V)_{ki}\notag\\
&=N_{kk}-\sum_{i=1}^n(V^TM^{-1}V)_{ki}\notag\\
&=N_{kk}-(V^TM^{-1}V\mathbf e_n)_k=0.\label{diagdom}
\end{align}
This means that the Schur complement $S_M$ is weakly diagonally dominant. Similarly, the same can be said about $S_N$.


\subsection{Sparsity pattern of the Schur complement}
We use results from graph theory to study the sparsity pattern of the Schur complement. Given a set of nodes $\mathcal N$ and edges $\mathcal E$, we call {\it adjacency matrix} of the undirected graph $\mathcal G(\mathcal N,\mathcal E)$ a symmetric matrix $\mathcal A$ such that $\mathcal A_{ij}\ne0$ if there exists an edge between nodes $i$ and $j$.

An undirected graph is {\it chordal} if every cycle of length greater than three has a chord, i.e. an edge between two non-consecutive nodes in the cycle. We say that the sparsity pattern of a matrix is chordal if the matrix can be interpreted as the adjacency matrix of a chordal graph. The following Theorem relates chordal graphs and positive definite matrices.
\begin{theorem}
A sparsity pattern $\mathcal S$ is chordal if and only if for every positive definite matrix with sparsity pattern $\mathcal S$ there exists a symmetric permutation of its rows and columns that produces a Cholesky factor with zero fill-in.
\end{theorem}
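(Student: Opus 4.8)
The plan is to reduce the statement to the classical characterization of chordal graphs by \emph{perfect elimination orderings}. Recall that an ordering $v_1,\dots,v_N$ of the vertices is a perfect elimination ordering if for each $i$ the later neighbours of $v_i$ (those $v_j$ with $j>i$ adjacent to $v_i$) form a clique. The central observation is that a symmetric permutation corresponds to an elimination order, and that performing the symbolic Cholesky factorization in this order introduces a fill edge between two later neighbours of $v_i$ exactly when that edge is not already present. Hence a permutation produces zero fill-in for every positive definite matrix with pattern $\mathcal S$ if and only if the corresponding order is a perfect elimination ordering of the graph whose adjacency matrix has pattern $\mathcal S$. Both directions of the theorem then follow from the equivalence that $\mathcal S$ is chordal if and only if the associated graph admits a perfect elimination ordering.

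For the forward direction (chordal implies a zero-fill permutation exists) I would invoke Dirac's lemma: every chordal graph is either complete or contains a \emph{simplicial} vertex, namely a vertex whose neighbourhood is a clique, and, crucially, deleting a vertex leaves the induced subgraph chordal. Removing simplicial vertices one at a time therefore yields, by induction on $N$, a perfect elimination ordering. I would then verify that eliminating a simplicial vertex in the Cholesky process only updates entries among its neighbours, which already form a clique, so no structurally zero entry becomes nonzero; since Schur complements of positive definite matrices remain positive definite, the argument proceeds through all $N$ steps and the fill is identically zero, independently of the numerical values.

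For the converse (a zero-fill permutation for all positive definite matrices implies chordality) I would argue by contraposition. If $\mathcal S$ is not chordal it contains a chordless cycle of length at least four; a short combinatorial argument then shows that no ordering can be a perfect elimination ordering, because among the cycle's vertices the one eliminated first has two non-adjacent later neighbours lying on the cycle. Hence, for every permutation, the symbolic factorization produces at least one fill edge. It remains to turn symbolic fill into genuine numerical fill: taking a diagonally dominant matrix $I+tW$ with $W$ supported on $\mathcal S$ and carrying generic off-diagonal entries, the predicted fill entries are non-vanishing polynomials in those entries, hence nonzero for generic $W$ and small $t>0$, while positive definiteness is guaranteed by diagonal dominance. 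Thus no permutation yields zero fill-in for every positive definite matrix with pattern $\mathcal S$.

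The main obstacle is the non-cancellation step in the converse: for a fixed matrix the actual fill pattern could be strictly smaller than the symbolic one because of fortuitous numerical cancellation, so the phrase ``zero fill-in for every positive definite matrix'' must be tied precisely to the \emph{symbolic} worst-case pattern. Making this rigorous requires exhibiting at least one positive definite matrix with the given pattern whose factor realizes every symbolically predicted nonzero, and the universal quantifier ``for every'' in the statement is exactly what permits selecting such a worst-case matrix. The underlying graph-theoretic equivalence (chordal if and only if a perfect elimination ordering exists), due to Fulkerson--Gross and Rose, I would cite rather than reprove in full.
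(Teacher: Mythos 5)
Your proposal is correct, but there is nothing in the paper to compare it against line by line: the paper does not prove this theorem at all, its entire ``proof'' being a citation (``See Theorem 9.1 of the reference on chordal graphs and semidefinite optimization''). What you have written is essentially the standard classical argument underlying that cited result: the equivalence of chordality with the existence of a perfect elimination ordering (Fulkerson--Gross, Rose), Dirac's simplicial-vertex lemma plus induction on the number of vertices for the forward direction, and, for the converse, the observation that the first-eliminated vertex of a chordless cycle of length at least four has two non-adjacent later neighbours on the cycle, so every ordering incurs symbolic fill. You also correctly isolate the one genuinely delicate point that the statement hides: symbolic fill does not automatically imply numerical fill, so the converse requires exhibiting a positive definite matrix whose factor realizes a symbolically predicted nonzero for each of the finitely many orderings. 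Your generic $I+tW$ construction is the right device for this, but the assertion that the predicted fill entries are non-vanishing polynomials is the crux of that step and should itself be justified; one clean way is to expand the relevant Schur complement entry in powers of $t$ and note that the coefficient of $t^{d}$, where $d$ is the length of a shortest path from $i$ to $j$ whose interior lies in the eliminated set, is a sum of distinct monomials (one per shortest path) in the entries of $W$, hence a nonzero polynomial, so a generic $W$ defeats cancellation for all permutations simultaneously. With that detail filled in, your argument is complete and self-contained, which is more than the paper provides; conversely, the paper's decision to cite rather than reprove is defensible, since this theorem is classical background and the paper's original graph-theoretic contribution only begins with its Theorem 2 on bipartite graphs and the chordality of $\mathcal M\mathcal M^T$.
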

\begin{proof}
See \cite[Theorem 9.1]{chordal}.
\end{proof}

An undirected bipartite graph has an adjacency matrix of the kind
\begin{equation}
\label{bipartite_adjacency}
\mathcal A=\begin{bmatrix}0 & \mathcal M\\ \mathcal M^T & 0\end{bmatrix}
\end{equation}
where $\mathcal M$ is called the {\it biadjacency matrix}. Equivalently, given any matrix $\mathcal M$, we can build a corresponding bipartite graph for which $\mathcal M$ is its biadjacency matrix.

The following result is important for the matrices considered in this paper:
\begin{theorem}
\label{theorem_graph}
{\newpart Consider a matrix $\mathcal M\in\mathbb R^{m\times n}$, and the corresponding bipartite graph for which $\mathcal M$ is the biadjacency matrix; if every cycle of length larger than or equal to $8$ in the bipartite graph has at least a chord, then $\mathcal M\mathcal M^T$ has chordal sparsity pattern.}
\end{theorem}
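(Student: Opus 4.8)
The plan is to interpret the sparsity pattern of $\mathcal{M}\mathcal{M}^T$ as a graph on the $m$ row-nodes of the bipartite graph $B$ and to show directly that this graph is chordal. Since $(\mathcal{M}\mathcal{M}^T)_{ik}=\sum_j \mathcal{M}_{ij}\mathcal{M}_{kj}$, and assuming (as is standard for sparsity-pattern results) that no numerical cancellation occurs, the off-diagonal entry $(i,k)$ is nonzero precisely when row-nodes $i$ and $k$ have a common neighbour among the column-nodes of $B$. Thus the sparsity pattern of $\mathcal{M}\mathcal{M}^T$ is exactly the adjacency matrix of the \emph{one-mode projection} $\mathcal{G}$ of $B$ onto the row-nodes, and by the definition of chordal sparsity pattern given before the theorem it suffices to prove that $\mathcal{G}$ is chordal, i.e.\ that every cycle of $\mathcal{G}$ of length $\ell\ge 4$ has a chord.

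First I would fix such a cycle $C=(i_1,i_2,\dots,i_\ell)$ in $\mathcal{G}$ and \emph{lift} it into $B$: each cycle edge $(i_s,i_{s+1})$ arises from some common column-neighbour $j_s$, so that $B$ contains a closed walk $i_1-j_1-i_2-j_2-\cdots-i_\ell-j_\ell-i_1$ of length $2\ell\ge 8$. The row-nodes $i_s$ are distinct, being the vertices of $C$; the whole argument turns on the behaviour of the column-nodes $j_s$. In the generic case where the $j_s$ are also distinct, the closed walk is a genuine cycle of length $2\ell\ge 8$ in $B$, so the hypothesis supplies a chord. Since $B$ is bipartite every chord joins a row-node to a column-node, say $(i_a,j_b)$ with $j_b$ non-consecutive to $i_a$ on the lifted cycle, i.e.\ $b\neq a-1,a$. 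Because $j_b$ is the common neighbour of $i_b$ and $i_{b+1}$, the chord forces $i_a$ to be adjacent in $\mathcal{G}$ to both $i_b$ and $i_{b+1}$.

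A short index check then shows that at least one of $(i_a,i_b)$, $(i_a,i_{b+1})$ is non-consecutive on $C$ and hence a genuine chord of $C$: the edge $(i_a,i_b)$ fails to be a chord only when $b=a+1$, in which case $(i_a,i_{b+1})=(i_a,i_{a+2})$ is a chord for $\ell\ge 4$; symmetrically $(i_a,i_{b+1})$ fails only when $b=a-2$, which is handled by $(i_a,i_b)=(i_a,i_{a-2})$. The two bad values cannot coincide for $\ell\ge 4$, so in every case $C$ acquires a chord.

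The main obstacle, and the step that needs care, is the degenerate case where two cycle edges share a common neighbour, $j_s=j_t$ with $s\neq t$; then the lift is not a simple cycle and the hypothesis cannot be invoked. Here I would argue combinatorially: $j_s=j_t$ makes $\{i_s,i_{s+1},i_t,i_{t+1}\}$ a clique in $\mathcal{G}$, and because these come from two distinct edges of $C$, this clique always contains a pair of vertices non-consecutive on $C$. If the two edges are vertex-disjoint then $(i_s,i_t)$ is such a pair, since $t\neq s\pm1$; if they share a vertex, say $i_{s+1}=i_t$, then $i_s$ and $i_{s+2}$ are both joined to $j_s$ and $(i_s,i_{s+2})$ is a chord for $\ell\ge 4$. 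Either way $C$ has a chord, and combining the two cases proves that $\mathcal{G}$, and therefore the sparsity pattern of $\mathcal{M}\mathcal{M}^T$, is chordal. I expect the distinct-$j_s$ index bookkeeping and this degenerate clique case to be the only genuinely fiddly parts; the remainder is a direct translation between $B$ and its projection.
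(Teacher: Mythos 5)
Your proof is correct and follows essentially the same route as the paper's: you identify the sparsity pattern of $\mathcal M\mathcal M^T$ with the one-mode projection (the paper's ``secondary graph''), lift a cycle of that projection to the bipartite graph, dispose of the degenerate case of a repeated column-node via the common-neighbour clique observation, and invoke the hypothesis on cycles of length at least $8$. The only difference is presentational: the paper assumes a chordless projected cycle and derives a contradiction (under that assumption the lifted primary cycle is itself chordless, contradicting the hypothesis), whereas you argue directly and therefore carry out the extra index bookkeeping needed to project the primary chord back down to a chord of the original cycle.
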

\begin{proof}
Let us call $\mathcal N_0$ and $\mathcal N_0'$ the two groups of nodes in the bipartite graph; we call this graph the primary graph and indicate it as $\mathcal{PG}(\mathcal N_0,\mathcal N_0';\mathcal E_0)$. 

Since it is a bipartite graph, we can construct its adjacency matrix as in \eqref{bipartite_adjacency}.
We notice that $\mathcal A^2$ has two nonzero blocks
\[\mathcal A^2=\begin{bmatrix}\mathcal M\mathcal M^T & 0\\0 & \mathcal M^T\mathcal M\end{bmatrix}.\]
A known fact is that $\mathcal A^2$ tells us which nodes are connected by paths of length two; in this case, it is possible only if both nodes belong to $\mathcal N_0$ or to $\mathcal N_0'$, thus explaining the structure of $\mathcal A^2$. 

Let us focus on the first of the two blocks $\mathcal M\mathcal M^T$. We can build a secondary graph using only the nodes $\mathcal N_0$; two nodes are connected by an edge in the secondary graph if there is a path of length two between them in the primary graph. We denote the secondary graph as $\mathcal{SG}(\mathcal N_0,\mathcal E_2)$, where the subscript $2$ indicates that the edges correspond to 2-paths in $\mathcal{PG}$. Notice that $\mathcal M\mathcal M^T$ has the same sparsity pattern as the adjacency matrix of $\mathcal{SG}$. {\newpart  We want to prove that if every cycle of length at least $8$ in $\mathcal{PG}$ has a chord, then $\mathcal{SG}$ is chordal.}

Figure \ref{graphs} shows an example of a primary graph and the corresponding secondary graph. We have not represented self-loops, which are always present in $\mathcal{SG}$ by construction. 

\begin{figure}[h]
\caption{}
\label{}
\centering
\subfloat[Primary graph and associated secondary graph.\label{graphs}]{\includegraphics[width=.4\textwidth]{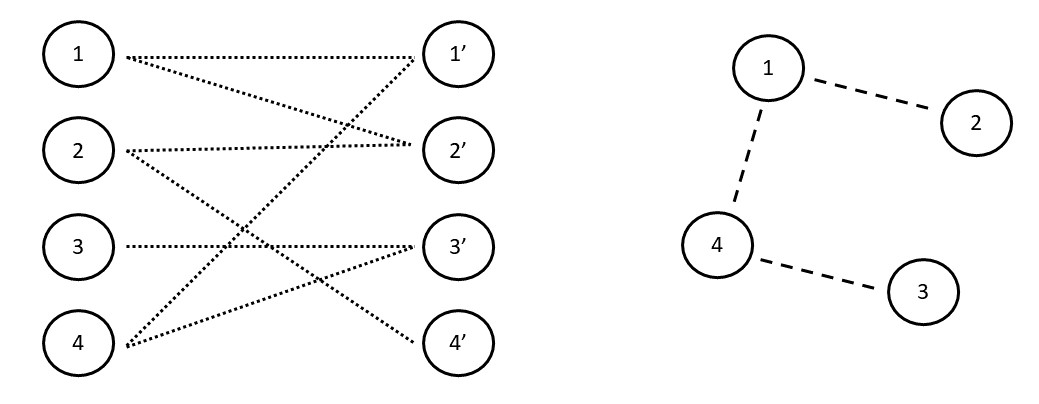}}\hspace{20pt}
\subfloat[Construction of a cycle in $\mathcal{SG}$.\label{loop3}]{\includegraphics[width=.4\textwidth]{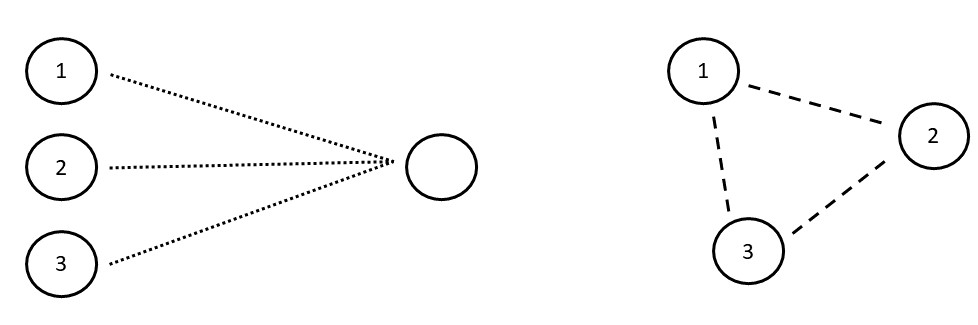}}
\end{figure}


Let us analyze the possible cycles in the secondary graph. A trivial way to have a cycle in $\mathcal{SG}$ is if a subset of nodes in $\mathcal N_0$ are all connected to the same node in $\mathcal N_0'$, in the graph $\mathcal{PG}$. Figure \ref{loop3} shows an example of this. In particular, any subset of nodes in $\mathcal N_0$ that are all connected to the same node in $\mathcal N_0'$ forms a complete subgraph of $\mathcal{SG}$.


We call primary cycle and secondary cycle respectively a cycle in $\mathcal{PG}$ and in $\mathcal{SG}$. Similarly, we define primary edges and secondary edges.

Let us argue by contradiction: let us try to construct a chordless secondary cycle of length at least 4 . Let us suppose that the cycle is formed by a subset of nodes $\mathcal Q\subseteq\mathcal N_0$. We know that if there are three or more nodes in $\mathcal Q$ connected to the same node in $\mathcal N_0'$ in the primary graph, then there exists a chord in the secondary cycle; thus, let us assume that this does not happen. This means that we need an {\it auxiliary} node in $\mathcal N_0'$ to connect in the primary graph each couple of subsequent nodes in the secondary cycle. Suppose that the chordless cycle in the secondary graph is $q_1\rightarrow q_2\rightarrow\dots\rightarrow q_k\rightarrow q_1$, with $k\ge4$. To form the secondary edge between $q_1$ and $q_2$, we need a path of length two in the primary graph which involves a node $q_1'\in\mathcal N_0'$. To form the secondary edge between $q_2$ and $q_3$, we need a path of length two in the primary graph which involves a node $q_2'\in\mathcal N_0'$, since we cannot use $q_1'$ again or we would produce a chord. Continuing this reasoning, to have a chordless cycle we need to use a different auxiliary node $q_j'\in\mathcal N_0'$ for each pair of subsequent nodes in the secondary cycle. This implies that any edge in $\mathcal{SG}$ corresponds to two edges in $\mathcal{PG}$. To construct the chordless cycle, we would need $k$ auxiliary nodes in $\mathcal N_0'$ and $2k$ edges in the primary graph. {\newpart Since $k\ge4$, this implies that there must be a cycle of length at least $8$ in $\mathcal{PG}$; this cycle is chordless because we are assuming that there are no three edges starting from the same node in $\mathcal N_0'$. This is a contradiction.}

\end{proof}

{\newpart Notice that, if we apply the Theorem to matrix $\mathcal M^T$, it follows that also $\mathcal M^T\mathcal M$ has a chordal sparsity pattern. Notice also that the Theorem applies, as a particular case, to matrices $\mathcal M$ that correspond to acyclic bipartite graphs.}

We will use this Theorem to analyze the sparsity pattern of the Schur complements obtained using the optimal solution $(\hat{\mathbf p},\hat{\mathbf y},\hat{\mathbf s})$; however, when the algorithm approaches optimality and the complementarity products get close to zero, the entries of $\Theta$ tend to zero or infinity. For the sake of formalism, define the matrix $\mathcal V$ as the sparsity pattern of $V$; then during the IPM iterations, $\mathcal V\to\hat{\mathcal V}$, where
\[\begin{cases}
\hat{\mathcal V}_{ij}=1\quad\text{if}\quad\hat{\mathcal P}_{ij}>0\notag\\
\hat{\mathcal V}_{ij}=0\quad\text{if}\quad\hat{\mathcal P}_{ij}=0\notag
\end{cases},\]
where $\text{vec}(\hat{\mathcal P})=\hat{\mathbf p}$.
At a certain IPM iteration, the Schur complements have the same sparsity pattern as $VV^T$ or $V^TV$; therefore, getting close to optimality, they tend to have sparsity pattern $\hat{\mathcal V}\hat{\mathcal V}^T$ or $\hat{\mathcal V}^T\hat{\mathcal V}$. The next result shows that these are chordal matrices.

\begin{corollary}
\label{corollarychordal}
Matrices $\hat{\mathcal V}\hat{\mathcal V}^T$ and $\hat{\mathcal V}^T\hat{\mathcal V}$ have chordal sparsity patterns.
\end{corollary}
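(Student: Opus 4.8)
The plan is to reduce the corollary to Theorem~\ref{theorem_graph} by identifying $\hat{\mathcal V}$ with the biadjacency matrix of a bipartite graph that has no cycles at all, so that the chordality hypothesis of the theorem holds vacuously. First I would observe that, by construction, $\hat{\mathcal V}$ is exactly the $0/1$ pattern of the optimal coupling $\hat{\mathcal P}$, with $\hat{\mathcal V}_{ij}=1$ precisely when $\hat{\mathcal P}_{ij}>0$. Viewing $\hat{\mathcal V}$ as a biadjacency matrix (as in \eqref{bipartite_adjacency}), the associated bipartite graph is nothing other than the minimum cost flow network of Section~\ref{graph_formulation} restricted to the edges carrying positive flow, i.e.\ the support graph of the optimal transport plan.

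The second, and decisive, step is to invoke the structural result recalled in Section~\ref{graph_formulation}: the bipartite graph corresponding to a vertex of the feasible region \eqref{feasible_region}, once the zero-flow edges are removed, is acyclic, and there is an optimal solution $\hat{\mathcal P}$ with at most $m+n-1$ nonzeros. Taking $\hat{\mathcal P}$ to be such a basic optimal solution, the support graph underlying $\hat{\mathcal V}$ contains no cycles whatsoever. Consequently the premise of Theorem~\ref{theorem_graph}, namely that every cycle of length at least $8$ carries a chord, is satisfied vacuously (this is precisely the particular case flagged in the remark following the theorem). Applying the theorem with $\mathcal M=\hat{\mathcal V}$ then yields that $\hat{\mathcal V}\hat{\mathcal V}^T$ has a chordal sparsity pattern, and applying it instead to $\mathcal M=\hat{\mathcal V}^T$ gives the same conclusion for $\hat{\mathcal V}^T\hat{\mathcal V}$, which completes the argument.

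The only real subtlety, and the step I would be most careful about, is the passage from ``the optimal solution'' to ``an acyclic (vertex) optimal solution.'' A general point in the optimal face need not be a vertex and could carry cycles in its support; what saves us is that the sparsifying algorithm of this paper drives $\hat{\mathbf p}$ toward a basic optimal solution with at most $m+n-1$ positive entries, for which acyclicity is guaranteed. I would therefore state explicitly that $\hat{\mathcal P}$ is taken to be a basic (vertex) optimal solution, so that the cited acyclicity result applies verbatim; under a nondegeneracy assumption this vertex is in fact unique. Once this identification is fixed, no further computation is needed, since everything else is a direct specialization of Theorem~\ref{theorem_graph}.
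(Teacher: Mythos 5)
Your proposal is correct and follows essentially the same route as the paper: identify $\hat{\mathcal V}$ with the biadjacency matrix of the support graph of the optimal plan, invoke the acyclicity of vertex (basic) solutions from Section~\ref{graph_formulation} so that the hypothesis of Theorem~\ref{theorem_graph} holds vacuously, and apply the theorem to both $\hat{\mathcal V}$ and $\hat{\mathcal V}^T$. The subtlety you flag about non-vertex optimal solutions is precisely the caveat the paper itself appends (the corollary is claimed only for an optimal solution whose support graph is acyclic), so your treatment is, if anything, slightly more explicit on this point.
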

\begin{proof}
Recall the graph formulation presented in Section~\ref{graph_formulation}: $\hat{\mathcal P}$ is the biadjacency matrix of the bipartite graph corresponding to the optimal solution, which is known to be acyclic. By construction, $\hat{\mathcal V}$ and $\hat{\mathcal P}$ have the same sparsity pattern. Therefore, matrix $\hat{\mathcal V}$ satisfies the assumption of Theorem~\ref{theorem_graph}.
{\newpart 
Notice that it there is more than one optimal solution, at least one of them has to correspond to an acyclic bipartite graph; therefore, the corollary holds at least for this specific solution $\mathbf{\hat p}$.
}
\end{proof}

\subsection{Mixing direct and iterative solvers}
We consider two options for solving system \eqref{normalequationsipm} with matrix \eqref{normaleq}: direct approach using Cholesky factorization \cite{matrixcomp} and iterative one using preconditioned conjugate gradient \cite{cg}.

We can identify two stages of the IPM. In the early iterations, the support is far from the optimal one, so we expect that if we compute the Cholesky factorization of the Schur complement, there would be a lot of fill-in; in this phase though, $\Theta$ is well conditioned and we can expect a simple preconditioner like incomplete Cholesky to work well. In the late iterations instead, we know that the normal equations matrix becomes extremely ill-conditioned due to the behaviour of matrix $\Theta$, which makes it difficult to find a good preconditioner for the conjugate gradient method; however, computing a full Cholesky factorization of the Schur complement would be extremely cheap since both $S_M$ and $S_N$ get closer and closer to a sparse chordal matrix, as was showed in Corollary~\ref{corollarychordal}.

The proposed approach first applies the conjugate gradient with incomplete Cholesky as preconditioner, with a value of the drop tolerance that is lowered every time too many linear iterations are performed; then, based on a switching criterion, when we decide that the Schur complement is "close enough" to the optimal one, we switch to a direct solution using an exact factorization with approximate minimum degree ordering. 

The Schur complements are weakly diagonally dominant, as shown in \eqref{diagdom} and this guarantees that the incomplete factorization never breaks down in exact arithmetic; however, since the matrices are only weakly diagonally dominant, we may need to lift their diagonal with a small perturbation, in order to prevent from numerical inaccuracies making the matrices lose diagonal dominance property.

When using a full factorization, we employ the $LDL^T$ algorithm; since both Schur complements are singular, we expect one entry of matrix $D$ to be zero. We deal with this problem adding a small shift to the diagonal of $D$, since the entry that should be zero could become negative due to numerical errors, {\newpart and using the Matlab backslash operator to apply matrices $L$ and $D$. Notice that this issue could also be dealt with using a modified Cholesky factorization that substitutes small pivots with an infinitely large value in the computation of the factors.}

\begin{remark}
{\newpart 
{\it
Notice that in this paper the plain Cholesky factorization from Matlab was employed; however, to achieve maximum efficiency, it is possible to update a symbolic factorization, that keeps track of the sparsity pattern of the factors, every time the support is updated.
}}
\end{remark}

\subsection{Switching strategy}
In order to switch between the iterative solver and the direct factorization, we need to be sure that the level of fill-in in the Cholesky factor is not going to be too large, or otherwise the method may lose efficiency. Matrix $V$ which is used to build the Schur complement is the biadjacency matrix of a bipartite graph that is not perfectly acyclic (this happens only at optimality), but gets closer and closer to being so; we can expect the number of cycles found in the graph to decrease and correspondingly the fill-in level of the Cholesky factor of the Schur complements to be reduced when the IPM gets close to optimality. One strategy to switch between iterative and direct solver can be based on the number of cycles that are found in the bipartite graph associated with matrix $V$ at any IPM iteration; however, counting the number of cycles in the graph can be quite expensive. A simpler approach is to count the number of edges in the bipartite graph to decide when to switch: intuitively, the more edges there are, the more likely it is to find cycles; moreover, if the number of edges is decreasing fast, it means that the IPM is getting close to optimality and thus we can expect the Cholesky factor to be sufficiently sparse. Notice that each edge corresponds to a variable that is allowed to be nonzero; the number of edges is thus readily available, since it is the number of variables in the support. Therefore, we can switch from iterative to direct solver as soon as we detect that the number of variables in the support is decreasing fast enough: to do so, we compute its relative variation in the last 5 iterations and switch method as soon as this number is sufficiently large.

\begin{remark}
{\it
{\newpart 
Notice that it is possible to employ other switching strategies, for example based on the value of the IPM parameter $\mu$; however, there is little difference in the final results and the current strategy, based on the speed of edge removal, is particularly simple to tune.
Notice also that, if the number of variables in the support happens to increase after the method has switched to the full factorization, it may be necessary to switch back to an iterative solution of the linear system. This problem never occurred in the experiments presented here, but such a strategy may be required for solving other problems.
}
}
\end{remark}

\section{Numerical results}
\label{section_results}

\vspace{10pt}
\subsection{Test problems}
We tested the proposed algorithm on the {\it DOTmark} (Discrete Optimal Transport benchmark) collection of problems \cite{dotmark}. It includes 10 classes of images, each containing 10 images; the images come from simulations based on various probability distributions (class 1-7), geometric shapes (class 8), classic test images (class 9) and scientific observations using microscopy (class 10). Figure \ref{dotmark_images} shows one example from each class.

\begin{figure}[h]
\caption{Example of images from each class of the DOTmark collection.}
\label{dotmark_images}
\captionsetup[subfigure]{labelformat=empty}
\centering
\subfloat[Class 1]{\includegraphics[width=.15\textwidth]{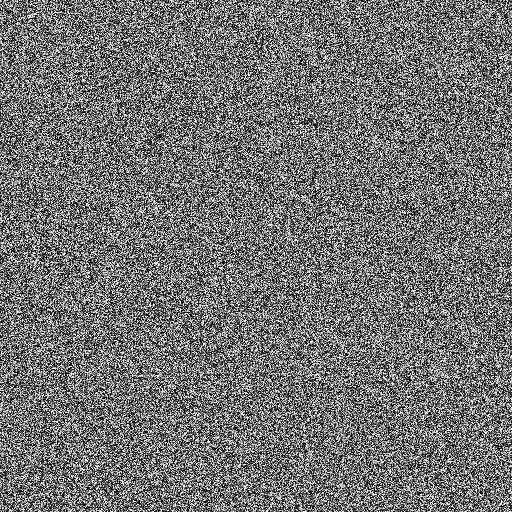}}
\subfloat[Class 2]{\includegraphics[width=.15\textwidth]{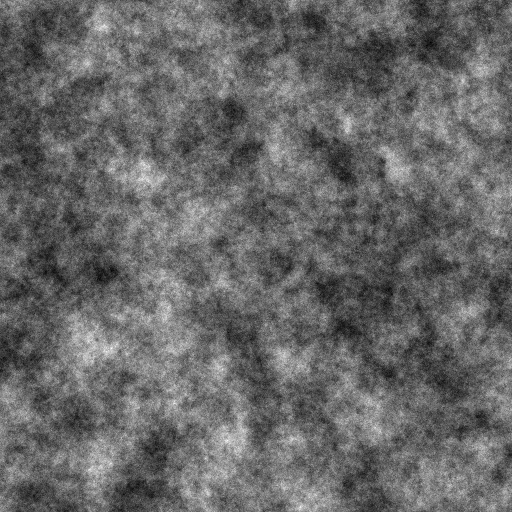}}
\subfloat[Class 3]{\includegraphics[width=.15\textwidth]{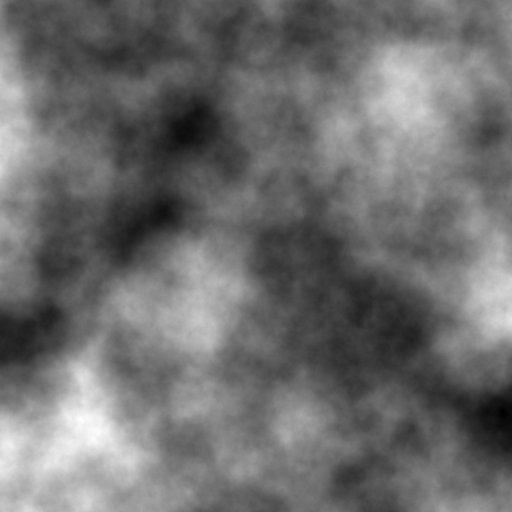}}
\subfloat[Class 4]{\includegraphics[width=.15\textwidth]{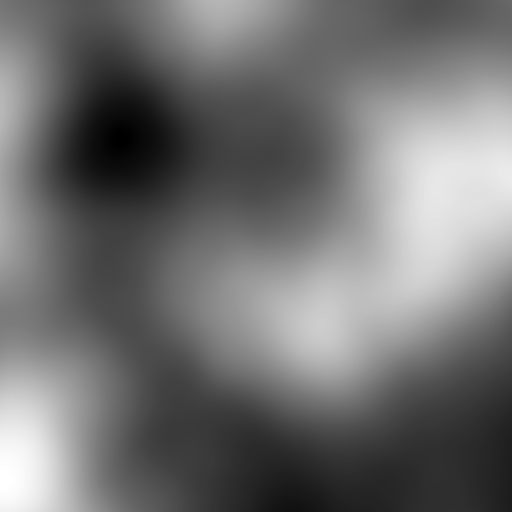}}
\subfloat[Class 5]{\includegraphics[width=.15\textwidth]{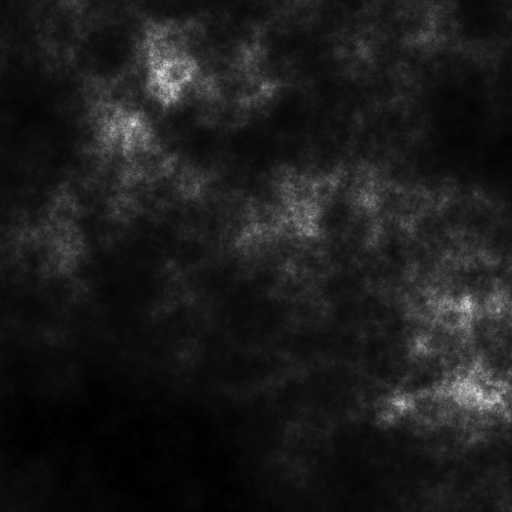}}\\
\subfloat[Class 6]{\includegraphics[width=.15\textwidth]{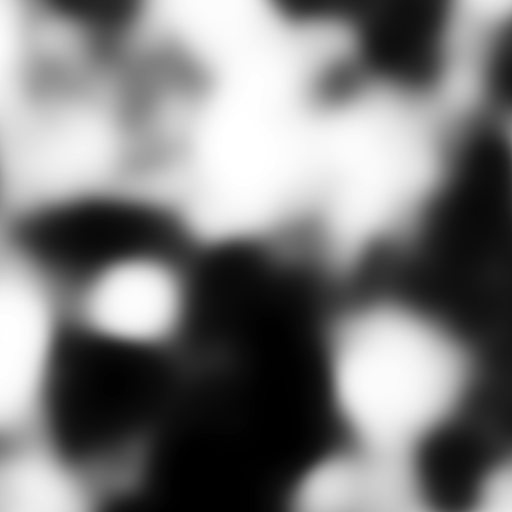}}
\subfloat[Class 7]{\includegraphics[width=.15\textwidth]{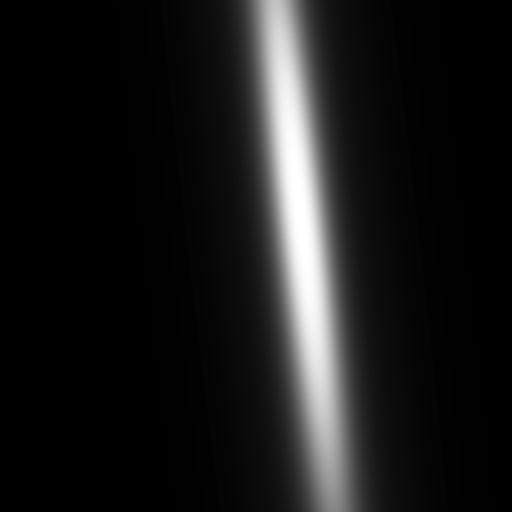}}
\subfloat[Class 8]{\includegraphics[width=.15\textwidth]{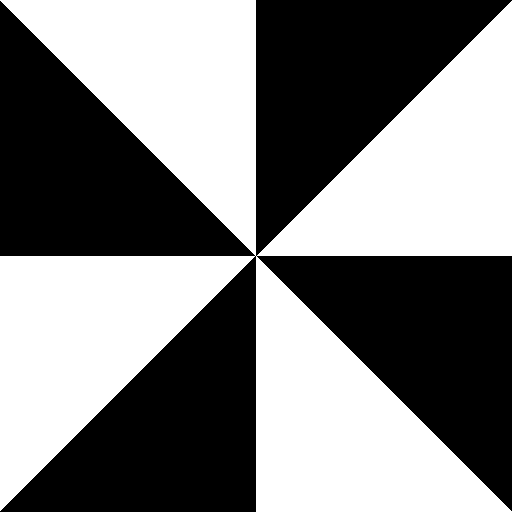}}
\subfloat[Class 9]{\includegraphics[width=.15\textwidth]{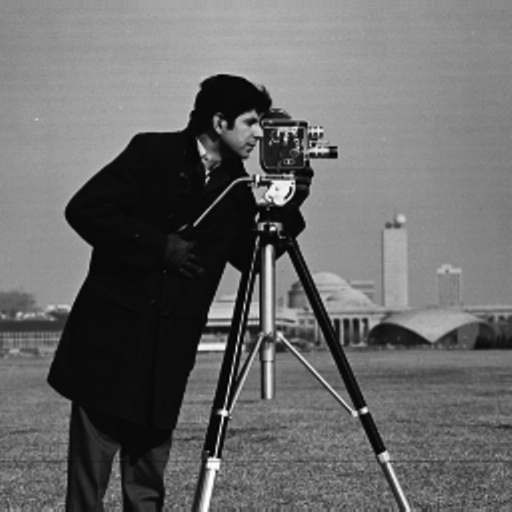}}
\subfloat[Class 10]{\includegraphics[width=.15\textwidth]{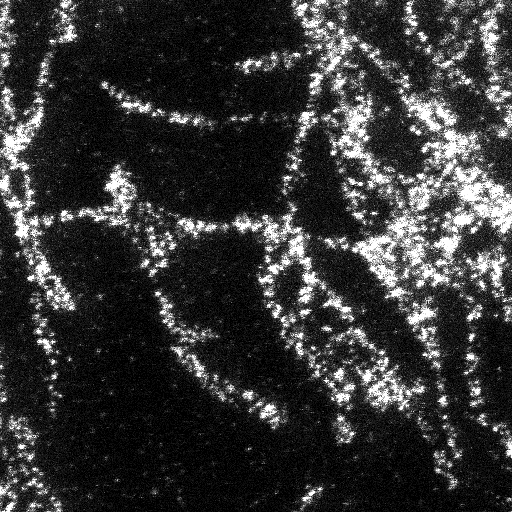}}
\end{figure}

Within a certain class, we can solve the OT problem between any couple of images, giving 45 problems per each class and a total of 450 instances overall. The images are available at different resolutions: we considered mainly the case $32\times32$, $64\times64$ and $128\times128$ pixels and show partial results also with $256\times 256$. The problems that arise for a certain resolution \texttt{res} have a number of constraints equal to $2\,\texttt{res}^2$ and a number of variables $\texttt{res}^4$; Table~\ref{problemsize} shows the dimension of the problems considered.

\begin{table}[h]
\small
\caption{Size of the problems solved}
\label{problemsize}
\centering
\begin{tabular}{rrr}
\hline
{\tt res} & Constraints & Variables ($\times 10^6)$\\
\hline
32 & 2,048 & 1.0\\
64 & 8,192 & 16.8\\
128 & 32,768 & 268.4\\
256 & 131,072 & 4,295.0\\
\hline
\end{tabular}
\end{table}

As cost function, we consider the $1-$distance, $2-$distance and $\infty-$distance: the vectors $\mathbf a$ and $\mathbf b$ represent the vectorized forms of two images $\mathcal A$ and $\mathcal B$; $\mathcal C_{ij}$ is the cost of moving mass from position $i$ to position $j$; in the images, position $i$ corresponds to the entry $\mathcal A_{\alpha_i,\beta_i}$, while position $j$ corresponds to $\mathcal B_{\alpha_j,\beta_j}$. Then, the distances used are
\[\mathcal C^1_{ij}=|\alpha_i-\alpha_j|+|\beta_i-\beta_j|,\]
\[\mathcal C^2_{ij}=\sqrt{(\alpha_i-\alpha_j)^2+(\beta_i-\beta_j)^2},\]
\[\mathcal C^\infty_{ij}=\max(|\alpha_i-\alpha_j|,|\beta_i-\beta_j|).\]

Notice the following: if we consider an image with $k$ pixels per side, the maximum possible distance between two pixels is approximately $2k$, $\sqrt{2}k$ or $k$, respectively for the $1$, $2$ and $\infty$ distance; all of these values grow linearly with $k$. Therefore, the parameter $C_\text{max}$ in Section~\ref{pricing_section} should also scale linearly with the size of the image considered: in this way, the pricing heuristics considers always the same fraction of the total number of variables for each resolution considered.

\subsection{Solvers}

There are many methods that have been proposed for the solution of the Kantorovich linear program \eqref{kantorovichlp} (see e.g.\ \cite{AHA,sinkhorn,shortlist,AHA2,shielding}); a comparison of them can be found in \cite{dotmark}. {\newpart  We compared the proposed method to two very efficient implementations of the network simplex algorithm \cite{bertsekas,orlin}, coming from IBM Cplex \cite{cplex} and the LEMON (Library for Efficient Modelling and Optimization in Networks) library \cite{lemon,lemon_paper}. 

Cplex is a highly optimized commercial software that showed excellent reliability and consistency of computational times throughout all the 10 classes of the DOTmark collection in \cite{dotmark}. To use it directly from its callable library, the optimal transport problems are generated in C and the network structure, in terms of nodes and edges, is passed to a Cplex CPXNET model. Only the network optimizer of Cplex is considered in this work, since the standard dual simplex and the barrier solver were approximately 10 to 20 times slower, for resolutions 32 and 64 (this fact was also noticed in \cite{dotmark}).

LEMON \cite{lemon_paper} is a library for network optimization written in C$++$ that has shown to outperform Cplex in some applications \cite{castro_lemon}. Out of the four algorithms available in LEMON for the minimum cost flow problem (network simplex, cost scaling, capacity scaling and cycle cancelling), the network simplex was the method which gave the best results for the problems considered in this paper. To use LEMON, the OT problem is generated directly in C$++$ in terms of nodes and edges lists, that are then passed to a LEMON network simplex model. However, since LEMON only accepts integer input data for a network simplex model, we had to convert the OT problem data to integer form (multiplying them by a large constant) before performing the computation. This approach may lead to a loss of accuracy and potentially to integer overflow if larger problems are considered. The model was run with the pivot rule parameter set to {\tt block-search}.

The interior point method instead is implemented in MATLAB, in a way that exploits as much as possible the built-in functions, to reduce the computational time required.} 
The parameters for the IPM are: feasibility and optimality tolerance $10^{-6}$; conjugate gradient tolerance for predictors $10^{-6}$ and correctors $10^{-3}$ (we used a different tolerance for predictors and correctors, as was shown in \cite{cg_termination}); maximum number of IPM iterations $200$; maximum number of conjugate gradient iterations (for each call) $1000$; maximum number of correctors $3$.

{\newpart 
All the experiments were performed on the  University of Edinburgh School of Mathematics computing server, which is equipped with four 3.3GHz octa-core Intel Gold 6234 processors and 500GB of RAM. The specific versions of the software used were as follows: MATLAB R2018a, Cplex 20.1, Lemon 1.3.1 and GCC 4.8.5 as compiler.

\begin{remark}
{\it The Lemon library could not be installed on the computing server, but we could only use it by compiling the source code directly. The performance of Lemon relative to the other solvers, when installed on a local machine, may be slightly better that the one shown here.}
\end{remark}
}

\subsection{Results}
{\newpart  We report the results for the whole DOTmark collection, for all three solvers, for resolutions 32, 64 and 128 pixels.}
\begin{remark}
{\it The IPM is an inexact method, while the network simplex finds the exact solution. {\newpart  The accuracy of the approximate solution can be measured with the} relative Wasserstein error (RWE):
\[RWE(\mathbf a,\mathbf b)=\Big|\frac{W_2^\text{IPM}(\mathbf a,\mathbf b)-W_2^\text{Cplex}(\mathbf a,\mathbf b)}{W_2^\text{Cplex}(\mathbf a,\mathbf b)}\Big|\]
where the 2-Wasserstein distance is defined in \eqref{wasserstein}. 
{\newpart  In all the experiments performed, the RWE was of the order of $10^{-6}-10^{-8}$, which indicates that the IPM solution was very accurate.}}
\end{remark}

Table~\ref{table_results} reports the average results for each class, for resolutions $32$, $64$ and $128$ and for each cost function considered, in terms of iterations ({\tt iter}) and time ({\tt time}) of the proposed method, Cplex time ({\tt Cplex}) and Lemon time ({\tt Lemon}). Table~\ref{table_details} instead reports the average over all the three cost functions for each class of the number of CG iterations performed (\texttt{CG}), the maximum fill-in percentage level of the factorization of the Schur complement (\texttt{fill}) and the number of iterative (\texttt{iter}) and direct (\texttt{dir}) iterations performed. Figure~\ref{perf_profile} shows the performance profiles of the computational time, while Figure~\ref{results_128} compares the time taken by the three solvers for each problem at resolution $128$.

\begin{table}[h]
\caption{Average results for each class and cost function}
\label{table_results}
\centering
\tiny
\begin{tabular}{c|c|rrrr|rrrr|rrrr}
\hline
&& \multicolumn{4}{c}{$32\times32$}&\multicolumn{4}{c}{$64\times64$}&\multicolumn{4}{c}{$128\times128$}\\
\cline{3-6}
\cline{7-10}
\cline{11-14}
Dist & Class & {\tt iter} & {\tt time} & {\tt Cplex} & {\tt Lemon} & {\tt iter} & {\tt time} & {\tt Cplex} & {\tt Lemon} & {\tt iter} & {\tt time} & {\tt Cplex} & {\tt Lemon} \\
\hline
\multirow{10}{*}{1} & 1 & 10.9 & 0.30 & 0.41 & 0.18 & 13.6 & 2.12 & 12.02 & 7.83 & 19.1 & 31.43 & 1,220.27 & 219.01\\ 
& 2 & 11.5 & 0.35 & 0.36 & 0.18 & 18.0 & 3.84 & 11.17 & 7.25 & 35.4 & 108.06 & 1,140.72 & 216.28\\ 
& 3 & 16.0 & 0.58 & 0.36 & 0.16 & 26.9 & 7.79 & 10.75 & 6.63 & 44.4 & 161.26 & 1,100.77 & 159.85\\ 
& 4 & 20.3 & 0.84 & 0.34 & 0.17 & 38.9 & 15.69 & 10.77 & 6.42 & 68.0 & 279.05 & 1,097.77 & 145.91\\ 
& 5 & 25.1 & 1.11 & 0.35 & 0.16 & 40.1 & 18.93 & 12.11 & 7.04 & 139.3 & 1,107.91 & 1,234.57 & 143.99\\ 
& 6 & 18.7 & 0.60 & 0.40 & 0.17 & 36.0 & 16.88 & 13.25 & 7.54 & 57.4 & 227.49 & 1,355.14 & 217.01\\ 
& 7 & 30.9 & 1.42 & 0.29 & 0.20 & 68.2 & 40.14 & 12.86 & 7.30 & 83.3 & 1,074.91 & 1,306.70 & 154.39\\ 
& 8 & 17.2 & 0.58 & 0.33 & 0.16 & 50.2 & 51.89 & 11.66 & 6.55 & 74.8 & 729.58 & 1,189.68 & 155.40\\ 
& 9 & 14.8 & 0.45 & 0.33 & 0.15 & 24.4 & 7.20 & 11.67 & 7.17 & 49.8 & 155.11 & 1,196.78 & 172.44\\ 
& 10 & 22.2 & 0.79 & 0.37 & 0.18 & 41.2 & 18.30 & 10.80 & 6.41 & 60.1 & 246.45 & 1,107.03 & 156.48\\ 
\hline
\multirow{10}{*}{2} & 1 & 18.5 & 0.69 & 0.51 & 0.18 & 26.2 & 5.16 & 13.48 & 7.57 & 30.4 & 45.01 & 1,139.28 & 190.94\\ 
& 2 & 29.7 & 1.18 & 0.50 & 0.19 & 72.1 & 16.90 & 16.15 & 9.00 & 114.0 & 481.26 & 1,369.31 & 446.40\\ 
& 3 & 49.8 & 2.22 & 0.53 & 0.18 & 90.6 & 30.47 & 18.66 & 9.41 & 150.9 & 814.64 & 1,583.77 & 600.02\\ 
& 4 & 60.0 & 2.93 & 0.54 & 0.20 & 96.4 & 40.41 & 20.17 & 9.66 & 166.8 & 720.83 & 1,715.39 & 615.64\\ 
& 5 & 56.2 & 2.50 & 0.56 & 0.20 & 108.6 & 43.24 & 21.77 & 9.69 & 181.7 & 2,628.16 & 1,846.61 & 502.28\\ 
& 6 & 46.6 & 2.14 & 0.56 & 0.19 & 86.5 & 32.06 & 21.54 & 10.23 & 159.3 & 4,613.47 & 1,829.04 & 705.84\\ 
& 7 & 63.6 & 3.55 & 0.56 & 0.23 & 106.3 & 56.12 & 21.38 & 9.40 & 183.1 & 902.35 & 1,819.55 & 332.04\\ 
& 8 & 34.4 & 1.48 & 0.48 & 0.17 & 70.8 & 29.91 & 18.51 & 8.70 & 127.8 & 424.25 & 1,573.27 & 488.31\\ 
& 9 & 43.8 & 1.94 & 0.53 & 0.18 & 99.7 & 30.85 & 18.57 & 10.33 & 145.5 & 501.34 & 1,577.44 & 644.97\\ 
& 10 & 50.4 & 2.17 & 0.55 & 0.21 & 87.1 & 28.17 & 16.49 & 7.00 & 154.2 & 1,425.74 & 1,394.50 & 273.86\\ 
\hline
\multirow{10}{*}{$\infty$} & 1 & 11.7 & 0.48 & 0.46 & 0.18 & 15.0 & 3.57 & 12.67 & 7.06 & 19.1 & 36.52 & 945.47 & 199.09\\ 
& 2 & 12.2 & 0.50 & 0.42 & 0.17 & 17.6 & 6.09 & 12.28 & 6.74 & 34.2 & 119.78 & 923.97 & 156.30\\ 
& 3 & 15.6 & 0.75 & 0.36 & 0.15 & 25.2 & 10.33 & 12.35 & 6.40 & 48.4 & 210.68 & 928.24 & 133.26\\ 
& 4 & 19.1 & 0.98 & 0.37 & 0.16 & 35.6 & 16.12 & 12.63 & 6.21 & 63.8 & 292.34 & 948.83 & 132.83\\ 
& 5 & 23.7 & 1.31 & 0.38 & 0.16 & 36.4 & 16.56 & 12.62 & 6.52 & 84.4 & 467.41 & 949.53 & 127.95\\ 
& 6 & 18.7 & 0.91 & 0.43 & 0.16 & 30.2 & 13.85 & 14.47 & 6.86 & 61.4 & 312.69 & 1,087.24 & 163.88\\ 
& 7 & 28.3 & 1.69 & 0.40 & 0.18 & 50.5 & 25.33 & 13.00 & 6.41 & 98.7 & 1,663.26 & 966.95 & 128.30\\ 
& 8 & 17.0 & 0.96 & 0.38 & 0.15 & 33.1 & 36.00 & 11.08 & 6.31 & 62.1 & 664.18 & 829.04 & 151.12\\ 
& 9 & 15.0 & 0.67 & 0.42 & 0.15 & 23.9 & 9.46 & 12.11 & 6.60 & 49.2 & 212.13 & 907.62 & 139.94\\ 
& 10 & 20.7 & 1.07 & 0.42 & 0.17 & 35.6 & 19.29 & 11.60 & 6.34 & 59.4 & 277.50 & 872.06 & 129.68\\ 
\hline
\end{tabular}
\end{table}

\begin{table}[h]
\caption{Details of the method for each class}
\label{table_details}
\centering
\scriptsize
\begin{tabular}{c|rrrr|rrrr|rrrr}
\hline
& \multicolumn{4}{c}{$32\times32$}&\multicolumn{4}{c}{$64\times64$}&\multicolumn{4}{c}{$128\times128$}\\
\cline{2-5}
\cline{6-9} 
\cline{10-13}
Class & \texttt{CG} & \texttt{fill} & \texttt{iter} & \texttt{dir} & \texttt{CG} & \texttt{fill} & \texttt{iter} & \texttt{dir} & \texttt{CG} & \texttt{fill} & \texttt{iter} & \texttt{dir} \\
\hline
1 & 1,354.7 & 9.6 & 13.2 & 0.4 & 2,697.6 & 3.9 & 17.0 & 1.3 & 3,916.1 & 1.5 & 21.1 & 1.7\\
2 & 1,342.2 & 11.7 & 14.9 & 2.9 & 3,460.2 & 4.3 & 27.9 & 8.0 & 10,925.3 & 1.6 & 74.3 & 2.1\\
3 & 1,840.6 & 12.2 & 23.8 & 3.3 & 4,564.0 & 2.8 & 43.5 & 4.1 & 15,844.7 & 1.9 & 104.0 & 2.6\\
4 & 2,144.5 & 11.7 & 29.9 & 3.2 & 5,130.7 & 3.0 & 53.7 & 3.2 & 16,666.0 & 1.5 & 107.2 & 5.1\\
5 & 1,670.1 & 12.9 & 28.9 & 6.1 & 4,765.0 & 3.0 & 51.1 & 10.6 & 20,031.6 & 2.6 & 168.8 & 4.5\\
6 & 1,976.5 & 11.4 & 25.8 & 2.3 & 5,046.1 & 3.2 & 48.6 & 2.3 & 14,258.0 & 2.9 & 95.0 & 2.9\\
7 & 2,498.0 & 11.7 & 38.7 & 2.3 & 6,230.1 & 3.3 & 72.3 & 2.7 & 19,147.9 & 2.4 & 161.8 & 6.3\\
8 & 1,521.1 & 10.9 & 21.7 & 1.2 & 6,544.9 & 3.6 & 49.4 & 2.0 & 13,875.7 & 1.6 & 90.2 & 3.9\\
9 & 1,798.9 & 11.5 & 22.2 & 2.4 & 4,647.1 & 3.0 & 43.5 & 5.9 & 15,194.9 & 1.9 & 94.9 & 2.6\\
10 & 1,728.7 & 13.3 & 26.2 & 4.9 & 5,211.4 & 4.7 & 44.8 & 9.9 & 14,045.0 & 1.8 & 97.9 & 17.1\\
\hline
\end{tabular}
\end{table}

{\newpart  The reported computational time accounts only for the optimization phase. However, there is some pre-processing time that Cplex and Lemon need to take in order to prepare the network model. In particular, after generating the problem (in terms of the vectors $\mathbf a$, $\mathbf b$ and $\mathbf c$), some arrays are created which contain the nodes of the graph and the respective supply, and the edges and the respective cost. Then, the network model for the respective solver needs to be created and the graph information needs to be passed to the model. These two phases, which are not required by the hybrid IP solver, in the larger instance of resolution $128$ require overall approximately 5 to 10 seconds. This extra time needed by both Cplex and LEMON is not included in the table and in the performance profiles. Moreover, the creation of the full network requires a substantial amount of memory, that is avoided by the method proposed here.}

\begin{remark}
{\it The reader should keep in mind that the number of iterations presented refers to the overall process of adding/removing variables and optimizing; it is thus not a surprise that there are instances where the iteration count is higher than what would be expected from a standard IPM. The cost of a single iteration however is considerably lower, given the high degree of sparsity of the vectors and matrices involved.}
\end{remark}

\begin{figure}[h]
\caption{Performance profiles of the computational time for the whole DOTmark collection, for the three cost functions, at different resolution.}
\label{perf_profile}
\captionsetup[subfigure]{labelformat=empty}
\centering
\subfloat[Resolution $32\times32$]{\includegraphics[width=.6\textwidth]{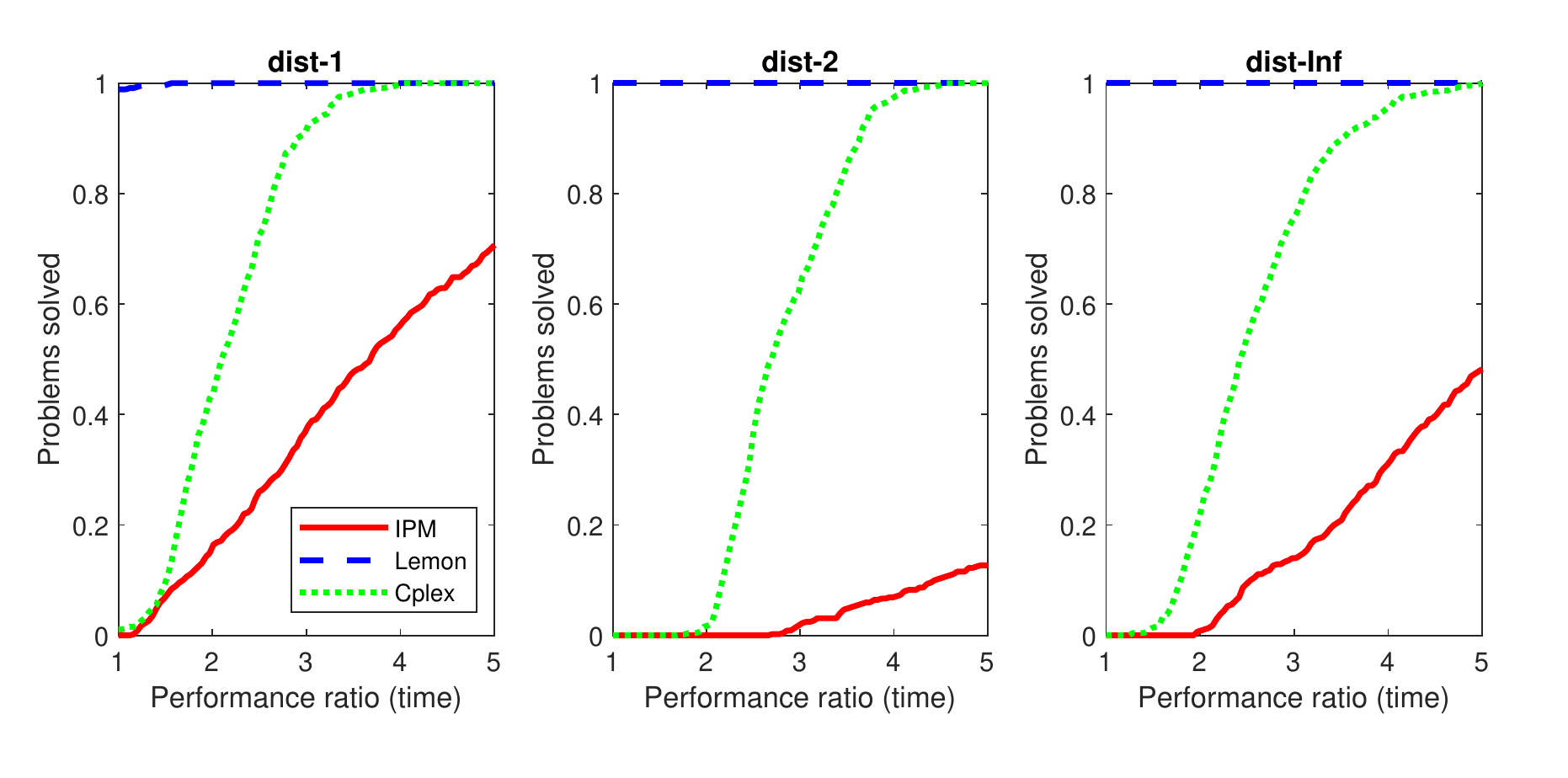}}\\
\subfloat[Resolution $64\times64$]{\includegraphics[width=.6\textwidth]{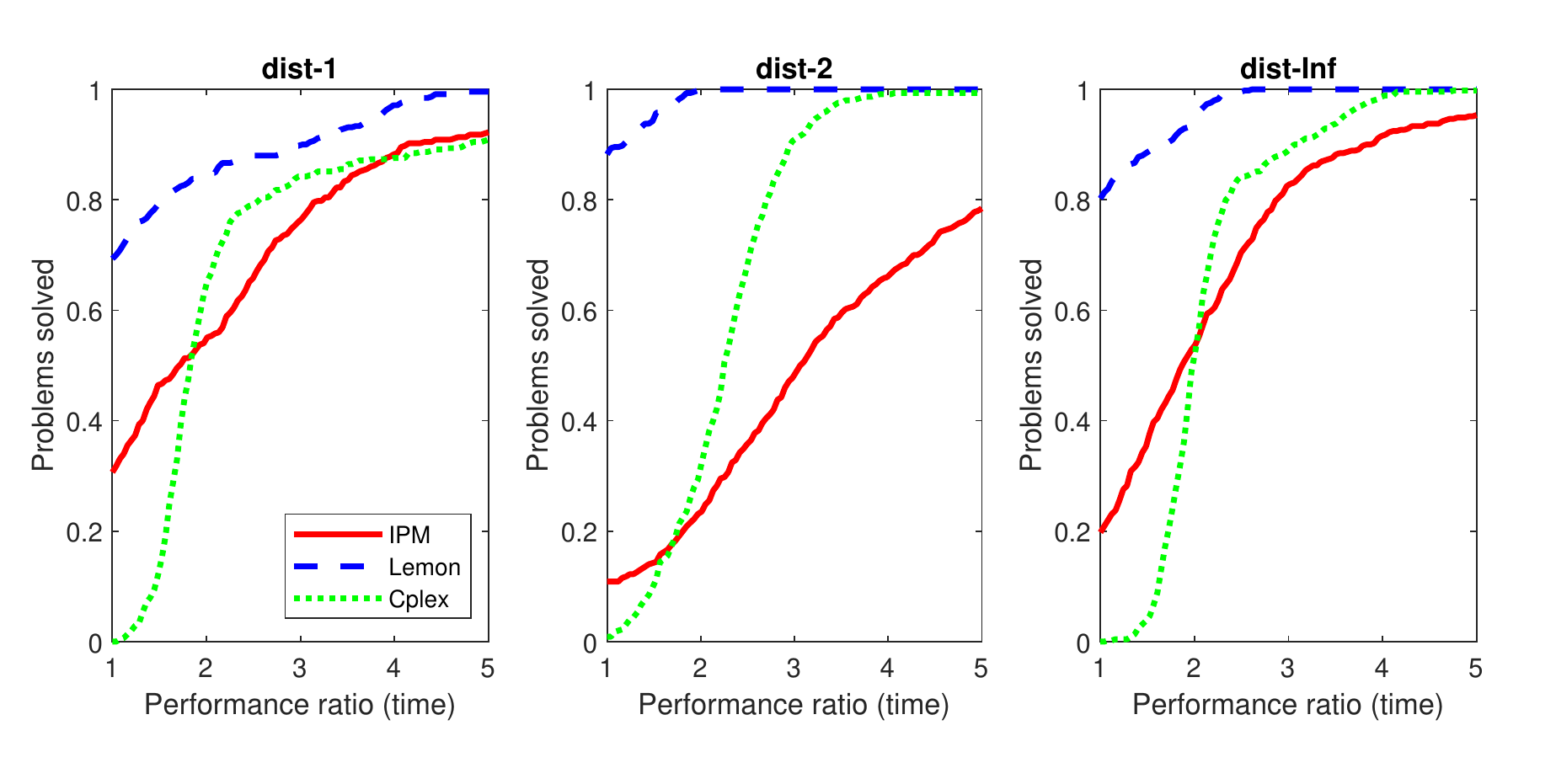}}\\
\subfloat[Resolution $128\times128$]{\includegraphics[width=.6\textwidth]{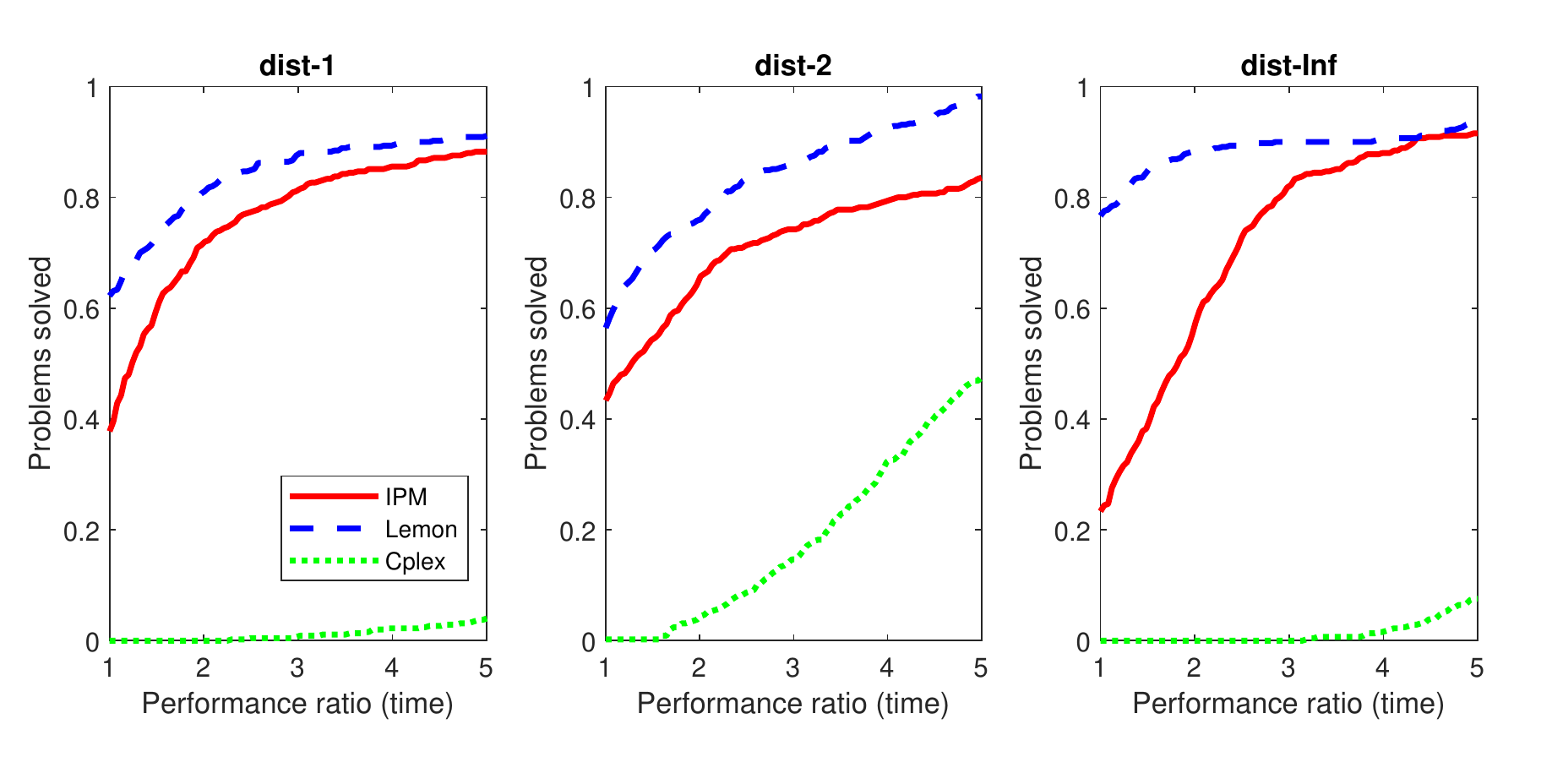}}
\end{figure}

\begin{figure}[h]
\caption{Computational time taken by the three solvers for each problem in the DOTmark collection, grouped by class and distance, for resolution $128$.}
\label{results_128}
\captionsetup[subfigure]{labelformat=empty}
\centering
\subfloat[Dist $1$]{\includegraphics[width=.35\textwidth]{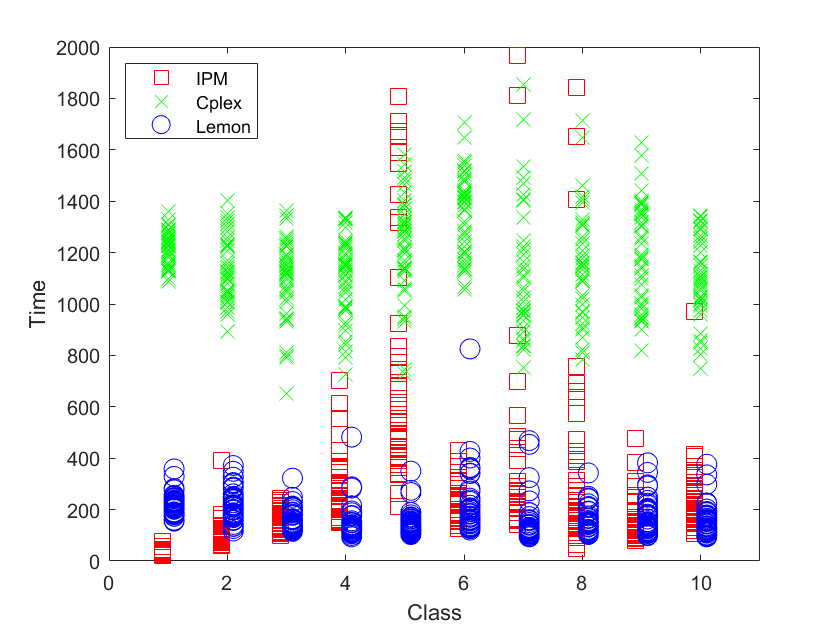}}
\subfloat[Dist $2$]{\includegraphics[width=.35\textwidth]{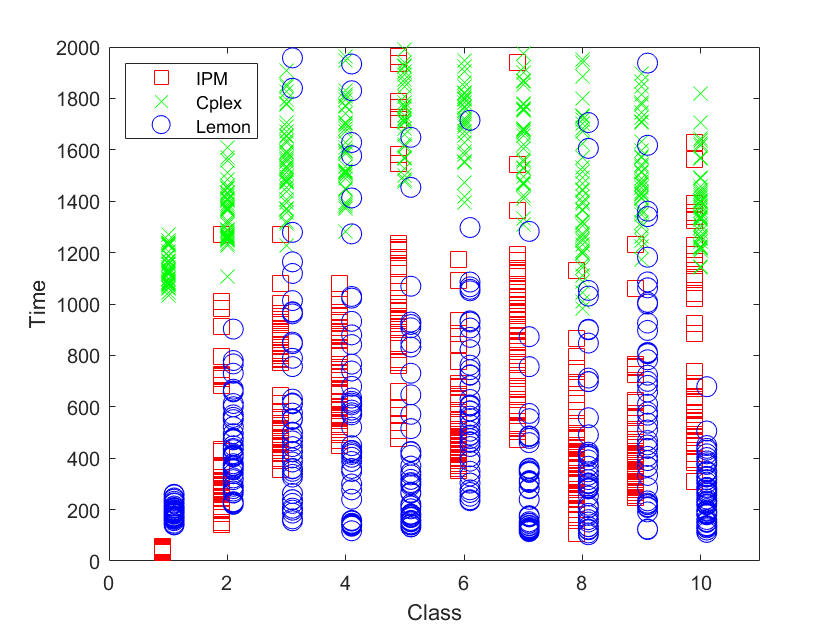}}\\
\subfloat[Dist $\infty$]{\includegraphics[width=.35\textwidth]{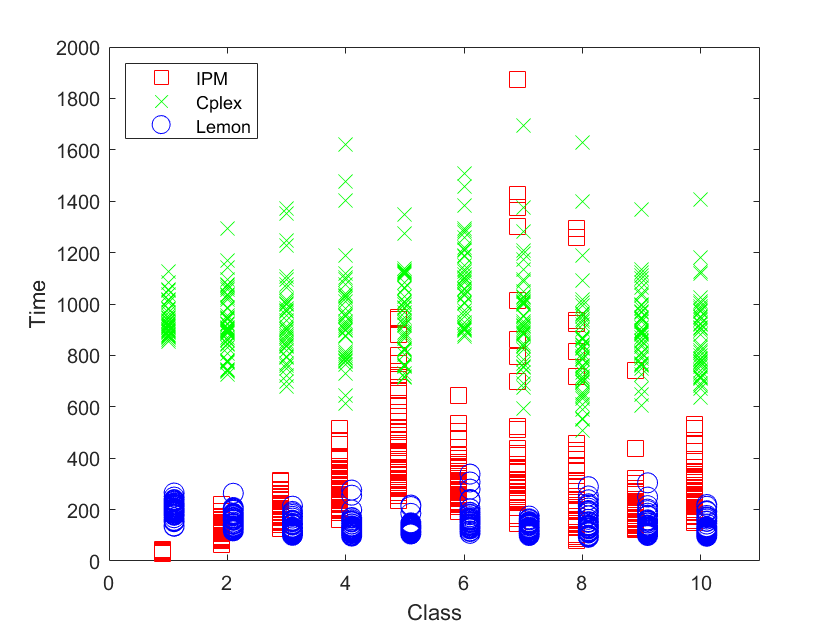}}
\end{figure}

{\newpart  
From these results, it is clear that for small problems ($32\times32$) Lemon outperforms both Cplex and the proposed method; however, the IPM scales better and becomes competitive for larger instances. Cplex instead scales badly and is considerably slower than the other solvers at resolution $128$. Given that both Cplex and Lemon are highly optimized libraries written in a compiled language, while the method proposed here is implemented in Matlab, we consider these results very promising. 

From Figure~\ref{results_128}, we can see that the proposed method behaves particularly well for problems in classes $1$, $2$, $3$, $9$ and $10$, since the time taken is close to the one taken by Lemon, and overall the spread of the times for all the problems in these classes remains narrow. Classes $5$, $7$ and $8$ instead are the ones with the weaker results: the computational time is considerably larger than the one taken by Lemon, with a large variability among the instances of these classes.} Observing the images in Figure \ref{dotmark_images}, it seems that the method performs better when the mass of the image is distributed evenly everywhere, while it struggles when it is concentrated in a narrow region. {\newpart  It is also evident that all three solvers struggle more when using the $2-$distance as cost function.}

From Table~\ref{table_details} we can see that the maximum fill-in level is independent of the class of the images and gets smaller when the problem becomes larger, since the density of the optimal solution decreases as $1/\texttt{res}^2$. We can also see that on average only the very last IPM iterations employ a direct factorization, which is what we wanted to achieve with the switching criterion proposed.

\subsection{Results for large instances}
{\newpart 
When considering problem with resolution $256\times256$, both Cplex and Lemon crash: they required more than 400GB of memory, in large part needed to store the huge network structure. The IPM however does not need to store the network and thus scales better in terms of memory, requiring at most approximately 120GB of memory; indeed, the only large data that the IPM stores are the cost vector $\mathbf c$, the reduced costs and the factorization of the Schur complement. Given the huge size of the problem, we show partial results for classes $1$, $2$ and $3$ and distances $1$ and $\infty$ (due to the long computational time required by the $2-$norm cost function at resolution 256).

\begin{figure}[h]
\caption{Logarithmic plot of the average computational time against the number of variables for classes $1$, $2$ and $3$ and cost functions $1$ and $\infty$.}
\label{res256}
\centering
\subfloat[$1-$distance]{\includegraphics[width=.7\textwidth]{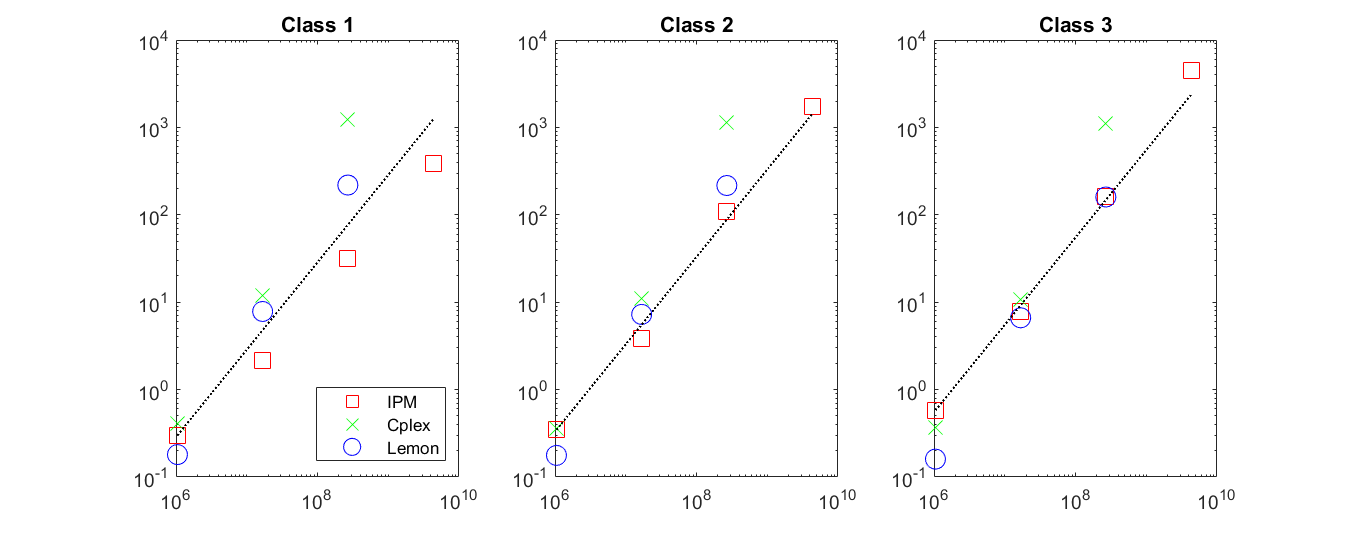}}\\
\subfloat[$\infty-$distance]{\includegraphics[width=.7\textwidth]{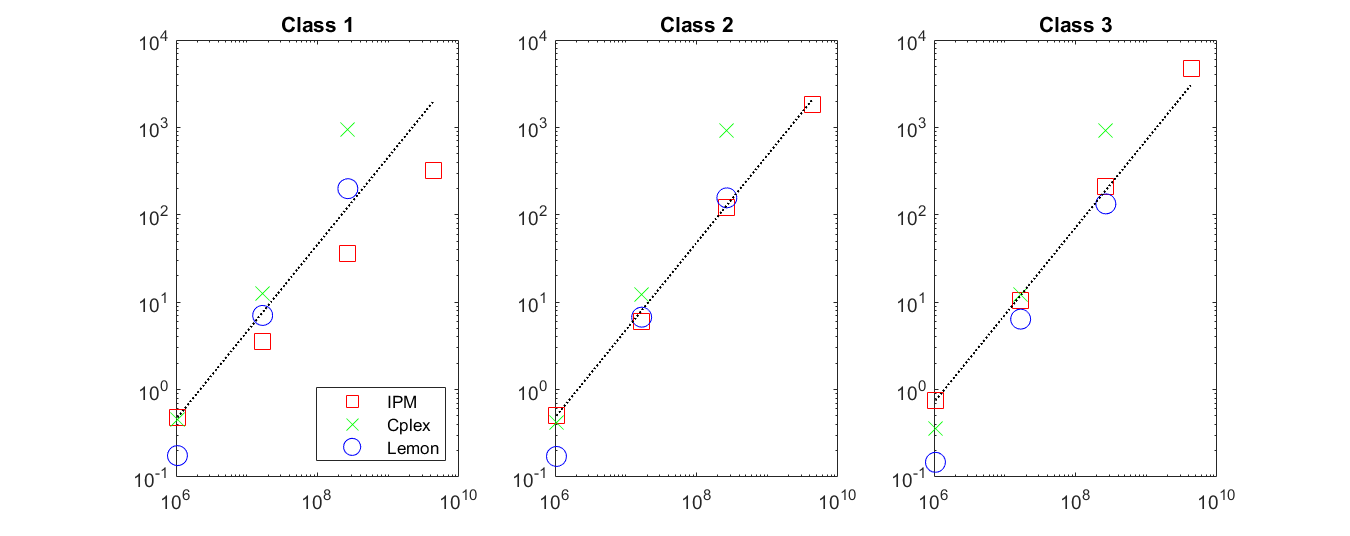}}
\end{figure}

Figure~\ref{res256} shows the growth of the average computational time, in a logarithmic plot, for the three solvers, from resolution $32$ to $128$ (for Cplex and Lemon) or $256$ (for the IPM), for the three classes considered; the dotted line shows a linear growth for comparison. 

This Figure highlights that the computational time of the proposed method grows linearly with the number of variables, while the other two solvers scale super-linearly. The growth of the computational time of the IPM appears to be sub-linear in some cases, for instance for the problems in class $1$. A possible explanation is the following: while the number of variables in the optimization problem grows by a factor $16$ (proportional to $m\cdot n$) when the resolution doubles, the expected number of nonzeros in the solution grows only by a factor $4$ (proportional to $m+n$); therefore, the number of variables kept in the support only needs to grow by a factor of $4$ to capture the nonzeros of the solution. As a consequence, the cost of vector operations grows proportionally to the square root of the number of variables and similarly for the number of nonzeros in the Schur complement and Cholesky factor (indeed, the Cholesky decomposition becomes relatively sparser, as shown in Table~\ref{table_details}, under the columns \texttt{fill}). However, some operations require the use of the large vector $\mathbf c$, like the computation of the full reduced costs; it is not surprising then that, if the number of IPM and conjugate gradient iterations does not grow excessively when changing resolution (as in the case of class $1$), the computational time growth is sub-linear.
}

{\newpart 
The numerical results have shown that the method proposed scales better both in terms of computational time and memory requirements, despite being still a prototype code written in MATLAB. The scalable results presented for problems with up to $4$ billion variables prove the efficacy of the proposed column-generation-inspired IPM for optimal transport problems and potentially for other optimization problems with a large network structure.

}

\section{Conclusion}
In this paper, a {\newpart  hybrid method that mixes interior point and column generation algorithms} was introduced for solving linear programs arising from discrete optimal transport. A careful analysis of the matrices involved allows for an efficient way of solving the Newton linear systems that arise within the interior point phase. Experimental results show that the proposed approach is able to outperform {\newpart  the network simplex solver of Cplex and compete with the highly efficient library LEMON, in particular in terms of memory required}. Results with huge scale problems, with up to four billion variables, confirm the robustness of the method.

Further research can potentially improve the performance even more, in particular in relation to:
\begin{itemize}
\item The choice of the starting subset of nonzero variables, which could be obtained by some more sophisticated heuristics, based on the initial and final distribution of mass; a better starting point could allow for a smaller size of the support, which would lead to faster computations.
\item The switching strategy from iterative method to direct factorization, which could reduce even further the maximum fill-in level of the Cholesky factor and/or the number of overall conjugate gradient iterations.
\item The pricing algorithm.
\end{itemize}

Moreover, the entropic regularized OT problem can be trivially dealt with using an interior point method, just by adding regularization to matrix $\Theta$; however, the structure of the optimal solution changes and the considerations about the linear solver made in this paper need to be reconsidered for that specific case. 

{\newpart 
Further research is also needed to deal with the convergence properties of the proposed method. Indeed, the combination with column generation prevents the method from keeping the known polynomial complexity of IPMs and the inexact solution of the restricted master problems, which are terminated after merely one IPM iteration, makes it difficult to apply standard results about convergence of column generation techniques. Theoretical results on convergence may also give an explanation to the relatively poorer behaviour of the method when the mass of the initial and final distribution is highly concentrated.
}

\section*{Acknowledgements}
The authors are grateful to the two anonymous referees, whose comments helped us to produce a stronger and more precise version of the paper.

\bibliographystyle{siam}
\bibliography{biblio}

\end{document}